\renewenvironment{proof}{{\textit{Proof.}}}{\hfill$\square$}
\def\EMAIL#1{\href{mailto:#1}{#1}}
\newcommand{\ww}{\mathbf{w}}
\begin{document}

\RUNAUTHOR{Kim}
\RUNTITLE{A Lagrangian with False Penalty for Nonconvex Optimization}

\TITLE{
A Lagrangian-Based Method with ``False Penalty" for Linearly Constrained Nonconvex Composite Optimization
}
\ARTICLEAUTHORS{%
\AUTHOR{Jong Gwang Kim}
\AFF{School of Industrial Engineering, Purdue University, West Lafayette, IN 47906, \EMAIL{kim2133@purdue.edu}}
} 

\ABSTRACT{
We introduce a primal-dual framework for solving linearly constrained nonconvex composite optimization problems. Our approach is based on a newly developed Lagrangian, which incorporates \emph{false penalty} and dual smoothing terms. This new Lagrangian enables us to develop a simple first-order algorithm that converges to a stationary solution under standard assumptions. We further establish global convergence, provided that the objective function satisfies the Kurdyka-Łojasiewicz property. Our method provides several advantages: it simplifies the treatment of constraints by effectively bounding the multipliers without boundedness assumptions on the dual iterates; it guarantees global convergence without requiring the surjectivity assumption on the linear operator; and it is a single-loop algorithm that does not involve solving penalty subproblems, achieving an iteration complexity of $\mathcal{O}(1/\epsilon^2)$ to find an $\epsilon$-stationary solution.  Preliminary experiments on test problems demonstrate the practical efficiency and robustness of our method.
}

\maketitle

\section{Introduction} \label{intro}
We consider the nonconvex optimization with linear constraints:
\begin{equation} \label{eq:op}
    \underset{x \in \mathbb{R}^n}{\mathrm{min}} \ \ f(x) + h(x) \ \ \mathrm{s.t.} \ \  Ax = b,
\end{equation}
where $f: \mathbb{R}^n \rightarrow \mathbb{R}$ is a continuously differentiable (possibly nonconvex) function with $L_f$-Lipschitz gradient; $h: \mathbb{R}^n \rightarrow \mathbb{R} \cup \{+\infty\}$ is a proper closed convex (not necessarily smooth) function; and $A:  \mathbb{R}^m \rightarrow \mathbb{R}^n$ is a linear operator and $b \in \mathbb{R}^m$. 

Solving nonconvex problems, even those without constraints, is generally challenging, and it is often computationally intractable to find even an approximate global minimum  (\citet{nemirovskij1983problem}). Furthermore, problem \eqref{eq:op} frequently arises in a variety of applications and tends to be large-scale (\citet{boyd2011distributed}). In this paper, our objective is to provide an efficient first-order method with theoretical guarantees for computing a stationary solution to problem \eqref{eq:op}. In particular, we present a single-loop first-order method, based on a new Lagrangian, to find an $\epsilon$-stationary solution (Definition \ref{def_epsilon-kkt}). We show that our method achieves an iteration complexity of $\mathcal{O}(1/\epsilon^{2})$ and ensures global convergence.

Our approach is closely related to the augmented Lagrangian (AL) framework, introduced by \citet{hestenes1969multiplier} and \citet{powell1969method}. This framework has been a powerful algorithmic framework for constrained optimization, including problem \eqref{eq:op} (see \citet{bertsekas2014constrained,birgin2014practical}, and references therein). In recent years, there has been renewed interest in AL-based methods, especially within the context of the Alternating Direction Multiplier Method (ADMM) scheme (\citet{glowinski1975approximation}). This renewed interest is largely due to the beneficial properties of AL-based methods, such as scalability and excellent practical performance in solving large-scale problems that arise in data science and machine learning; see, e.g., \citet{boyd2011distributed,latorre2019fast,scheinberg2010sparse,yang2017alternating} and references therein. 

The convergence and iteration complexity of AL-based methods for convex problems have been extensively studied and well established in the literature (see e.g., \citet{aybat2012first,lan2016iteration,liu2019nonergodic,ouyang2015accelerated,patrascu2017adaptive,shefi2014rate,xu2017accelerated,xu2021iteration}, among others). Given that the literature on AL-based methods is quite vast, we focus our review on the literature dealing with the iteration complexity and global convergence of AL-based methods for solving linearly constrained nonconvex problems.

\subsection{Related Work}
Recent research has focused on the iteration complexity of first-order AL-based methods for solving the nonconvex problem \eqref{eq:op}. Several notable approaches have been proposed in the literature. \citet{hajinezhad2019perturbed} proposed a perturbed-proximal primal-dual algorithm that converges to a first-order stationary solution under the assumption of initialization feasibility. This algorithm obtains an iteration complexity of $\mathcal{O}(1/\epsilon^{4})$. \citet{kong2019complexity} proposed a penalty method that utilizes an inner accelerated composite gradient to solve subproblems, achieving a complexity result of $\mathcal{O}(1/\epsilon^{3})$. Building on this work, \citet{kong2023iteration} further improved the complexity to $\mathcal{O}(1/\epsilon^{2.5})$ under Slater's condition by incorporating an accelerated composite gradient into the proximal AL methods. However, it is important to note that these methods require double-loops, which can increase the computational workload. In a different approach, \citet{zhang2020proximal,zhang2022global} presented a single-loop proximal AL method (SProx-ALM) for linearly constrained problems with a box constraint set \cite{zhang2020proximal} or a polyhedral set \cite{zhang2022global}. The authors showed that SProx-ALM is an {\em order-optimal} algorithm that achieves $\mathcal{O}(1/\epsilon^{2})$ iteration complexity with a hidden constant that depends on Hoffman’s constraints.

Another important line of research focuses on global convergence in the context of linearly constrained nonconvex optimization problems. Recent advances in AL-based algorithms have provided global convergence guarantees for these problems; see, e.g., \citet{bot2019proximal,boct2020proximal,li2015global,wang2019global,yang2017alternating,zeng2022moreau}. These algorithms do not impose any boundedness assumptions on the dual iterates, but rely on the assumption that every linear operator is surjective (i.e., full row rank matrix $A$) to ensure global convergence to a stationary solution. In a related important development, \citet{bolte2018nonconvex} and \citet{hallak2023adaptive} provided general AL-based frameworks with global convergence guarantees for nonconvex nonsmooth optimization problems with general constraints, including linear constraints.

\subsection{Contributions}
This paper makes the following contributions to the literature: 

\begin{itemize}

\item We introduce a new Lagrangian combined with artificial variables, which we call \emph{Proximal-Perturbed Lagrangian}. The artificial variables are used to get rid of the constraints with {\em false penalty} and dual smoothing (proximal) terms are added, leading to the strong concavity of the Lagrangian in the multipliers. Based on the new Lagrangian, we develop a single-loop first-order algorithm that guarantees convergence to a stationary solution. Our algorithm obtains an $\epsilon$-stationary solution with an iteration complexity of $\mathcal{O}(1/\epsilon^{2})$, which matches the best known $\mathcal{O}(1/\epsilon^{2})$ complexity of the algorithm in \citet{zhang2020proximal,zhang2022global}. 

\item We provide a relatively simple proof procedure to establish the complexity bound of $\mathcal{O}(1/\epsilon^{2})$ and global convergence under standard assumptions. Importantly, the structure of our proposed algorithm allows us to leverage the innovative proof technique proposed by \citet{gur2023convergent} for the unconstrained nonconvex setting, and adapt it effectively for our constrained setting. In addition, we do not impose the boundedness assumption on the multiplier sequence (\citet{bolte2018nonconvex,hallak2023adaptive}) nor the subjectivity of the linear operator $A$. Furthermore, our algorithm does not require the feasibility of initialization, the strict complementarity condition, and Slater's condition.

\item Our method has a practical advantage over other AL-based methods due to the use of fixed (false) penalty parameter. This feature simplifies the implementation of the algorithm by removing computational efforts in tuning penalty parameters and sensitivity to the choice of penalty parameters. Numerical results show that the fixed parameter, along with the bounded multipliers, leads to a consistent reduction in both first-order optimality and feasibility gaps.
\end{itemize}

\subsection{Outline of the paper}
The paper is organized as follows. Section \ref{prelim} provides the notation, definitions, and assumptions that we will use throughout the paper. In Section \ref{ppl}, we introduce the new Lagrangian function and propose a first-order primal-dual algorithm. In Section \ref{analysis}, we establish the convergence results of our algorithm. Section \ref{experiment} presents preliminary numerical results to demonstrate the effectiveness of the proposed algorithm.

\section{Preliminaries} \label{prelim}
This section provides the notation, definitions, and assumptions we will use throughout the paper. 

We let $\mathbb{R}^n$  denote the $n$-dimensional Euclidean space with inner product $\left\langle x, y \right\rangle$ for $x,y \in \mathbb{R}^n$. The Euclidean norm of a vector is denoted by $\left\| \: \cdot \: \right\|$. The distance function between a vector $x$ and a set $X \subseteq \mathbb{R}^n$ is defined as $\text{dist}(x,X) := \inf_{y \in X} \|y-x\|.$
For the matrix $A \in \mathbb{R}^{m \times n}$, the largest singular value of $A$ is denoted by $\sigma_{\max}$.
For a proper closed convex function $g: \mathbb{R}^n \rightarrow \mathbb{R} \cup \{ +\infty\}$, the domain of $g$ is defined as 
$\mathrm{dom}(h) := \{x \in \mathbb{R}^n \ | \ h(x) < +\infty\}.$ 
The function is said to be proper if $\text{dom}(h) \neq \emptyset$ and does not take the value $-\infty$. The function is called {\em closed} if it is lower semicontinuous, i.e., $\liminf_{x \rightarrow x^0}  h(x) \geq h(x^0)$ for any point $x^0 \in \mathbb{R}^n$. For any set $X \subseteq \mathbb{R}^n$, its indicator function $\mathcal{I}_X$ is defined by
$\mathcal{I}_X = 0   \text{ if }   x \in X \ \text{ and }  + \infty, \text{ otherwise}.$
We denote the subdifferential of a convex function $g$ at a point $x$ by $\partial h(x)$ (\citet[Definition 8.3]{rockafellar2009variational}):
\[
\partial h(x) := \left\lbrace v \in \mathbb{R}^n: h(y) \geq h(x) + \left\langle v, y-x \right\rangle  \ \ \forall y \in \mathbb{R}^n \ \text{and} \ x \in \text{dom}(h) \right\rbrace.
\]
Given $x \in \mathbb{R}^n$ and $\eta>0$, the {\em proximal map} associated with $g$ is defined by 
\begin{equation}
    \text{prox}_{\eta g} (x) := \underset{y \in \mathbb{R}^n}{\text{argmin}}\left\lbrace h(x) + \frac{1}{2\eta} \| x - y\|^2 \right\rbrace. \notag
\end{equation}

The stationary solutions of problem \eqref{eq:op} can be characterized by the points $(x^\ast, \lambda^\ast)$ that satisfy the Karush-Kuhn-Tucker (KKT) conditions:
\begin{definition}[KKT solution] \label{def_kkt}
We say a point $x^\ast$ is a \emph{KKT solution} for problem \eqref{eq:op} if there exists $\lambda^\ast \in \mathbb{R}^m$ such that
\begin{equation} \label{eq:def_kkt}
	0 \in \nabla f(x^\ast) + \partial h(x^\ast) +  A^{\top}\lambda^\ast, \quad Ax^\ast -b = 0.
\end{equation}
\end{definition}

We define $\epsilon$-KKT point (or $\epsilon$ -stationary solution) of the problem \eqref{eq:op}.

\begin{definition}[$\epsilon$-KKT solution] \label{def_epsilon-kkt}
Given $\epsilon>0$, a point $x^\star$ is said to be an {\em $\epsilon$-KKT solution} for problem \eqref{eq:op} if there exists $\lambda^\star \in \mathbb{R}^m$ such that
\begin{equation} \label{eq:def_epsilon-kkt}
\mathrm{dist}(0, \nabla f(x^\star) + \partial h(x^\star) + A^{\top}\lambda^\star) \leq \epsilon, 
	\quad \| Ax^\star-b \| \leq \epsilon,
\end{equation}
where $\partial h(x^\star)$ is the general subdifferenctal of $g$ at $x^\star$.
\end{definition}

Throughout the paper, we make the following assumptions on problem \eqref{eq:op}.

\begin{assumption}[Existence of KKT solution] \label{assumption_kkt}
There exists a primal-dual solution $(x, \lambda) \in \text{dom}(h) \times \mathbb{R}^m$ that satisfies the KKT conditions \eqref{eq:def_kkt}.
\end{assumption}

\begin{assumption}[Smoothness]\label{assumption_lipschitz}
Given the domain $\text{dom}(h) \subseteq \mathbb{R}^n $, $\nabla f$ is  $L_f$-Lipschitz continuous, i.e., there exists a constant $L_f > 0$ such that 
\begin{align}
    \|\nabla f(x) -\nabla f(x^\prime)\| & \leq L_f \|x-x^\prime\|, \quad \forall x,x^\prime \in \mathrm{dom}(h). \label{eq:lipschitz_i} 
\end{align}	
\end{assumption}

\begin{assumption}[Bounded domain] \label{assumption_bounded_domain}
The domain of the function $h$ is compact, i.e.,
\[
\underset{x,x^\prime \in \mathrm{dom}(h)}{\max} \|x-x^\prime\| < + \infty.
\]
\end{assumption}

The assumptions above are quite standard and are satisfied by a wide range of practical problems. Note that we do not impose some restrictive assumptions, including the feasibility of initialization (\citet{hajinezhad2019perturbed}), the strict complementarity condition (\citet{zhang2020proximal}), and Slater's condition (\citet{kong2019complexity,zhang2020proximal}). Moreover, we do not make the assumption of the full rank of the matrix $A$ (\citet{li2015global,boct2020proximal}).

\section{Proximal-Perturbed Lagrangian Method} \label{ppl}
In this section, we develop a first-order algorithm based on a new Lagrangian and observe some of its properties. 

\subsection{Proximal-Perturbed Lagrangian}
We begin by converting problem \eqref{eq:op} into an extended formulation by introducing \emph{perturbation} variables $z \in \mathbb{R}^m$ and letting $z = 0$ and $h(x) = z$:
\begin{equation} \label{eq:ep}
\begin{aligned}
	\underset{x \in \mathbb{R}^n,\: z \in \mathbb{R}^m}
	{\mathrm{min}}  \ \ f(x) + h(x)  \ \ 
	\mathrm{s.t.}  \ \ Ax - b=z, 
	                      \ \ z = 0.
\end{aligned}
\end{equation} 
Clearly, for the unique solution $z^\ast=0$, the above formulation is equivalent to problem \eqref{eq:op}. 
Let us define the \emph{Proximal-Perturbed Lagrangian} (P-Lagrangian) for problem \eqref{eq:ep}:
\begin{equation} \label{eq:PPL} 
\mathcal{L}_{\beta}(x,z,\lambda,\mu) =   f(x) + \left\langle \lambda, Ax - b - z \right\rangle + \left\langle \mu, z \right\rangle + \frac{\alpha}{2}\| z \|^2 - \frac{\beta}{2}\| \lambda - \mu \|^2 + h(x),
\end{equation}
where $\lambda \in \mathbb{R}^m$ and $\mu \in \mathbb{R}^m$ are the Lagrange multipliers associated with the constraints $Ax-b-z=0$ and $z = 0$, respectively. Here, $\alpha>0$ is a penalty parameter and $\beta>0$ is a dual proximal parameter.

Notice that the structure of $\mathcal{L}_{\beta}(x,z,\lambda,\mu)$ differs from the standard AL function and its variants.  It is characterized by the absence of a penalty term for handling linear constraint $Ax - b - z = 0$. Only the additional constraint $z = 0$ is penalized with the quadratic term $\frac{\alpha}{2}\| z \|^2$, termed {\color{blue} {``false penalty''}}\footnote{The term ``false penalty" draws an analogy from the ``False Nine" role in soccer. A false nine is a player who, despite being positioned as a Forward (classically target scoring goals), instead retreats into midfield to help control the game and create scoring opportunities (e.g., Messi). Similarly, the false penalty, combined with dual smoothing, guides the algorithm towards satisfying the constraints rather than directly penalizing constraint violations; see subsection \ref{subsec:false_penalty} for details.}, while $Ax - b - z = 0$ is relaxed into the objective with the corresponding multiplier. Additionally, the negative quadratic term $-\frac{\beta}{2}\| \lambda - \mu\|^2$, termed {\em dual smoothing}, makes $\mathcal{L}_{\beta}$ smooth and strongly concave in $\lambda$ for fixed $\mu$ and in $\mu$ for fixed $\lambda$. Note that due to the strong convexity of $\mathcal{L}_{\beta}(x,z,\lambda,\mu)$ in $z$, there exists a unique solution for given $(\lambda,\mu)$. If we minimize $\mathcal{L}_{\beta}(x,z,\lambda,\mu)$ with respect to $z$, we have
\begin{equation} \label{eq:opt_z}
    z(\lambda,\mu)={(\lambda - \mu)}/{\alpha}, 
\end{equation}
which implies  $\lambda=\mu$ at the unique solution $z^{\ast}=0$.
Based on this relation on $\lambda$ and $\mu$ at $z^\ast = 0$, we have added the term $-\frac{\beta}{2} \|  \lambda-\mu \|^2$ to the Lagrangian in \eqref{eq:PPL}.
Plugging $z(\lambda, \mu)$ into $\mathcal{L}_{\beta}(x,z,\lambda,\mu)$ yields the reduced P-Lagrangian: 
\begin{align} \label{eq:reduced_ppl}
	\mathcal{L}_{\beta}(x,z(\lambda, \mu),\lambda,\mu) 
	 = f(x) + \left\langle \lambda, Ax-b \right\rangle
	- \frac{1}{2\rho} \| \lambda- \mu \|^2 + h(x). 
\end{align}
Since $\mathcal{L}_{\beta}(x,z(\lambda, \mu),\lambda,\mu)$ is strongly concave in $\lambda$ for given $(x,\mu)$, there exists a unique maximizer, denoted by $\lambda(x,\mu)$. Maximizing the reduced P-Lagrangian in \eqref{eq:reduced_ppl} with respect to $\lambda$, we obtain
\begin{equation} \label{eq:opt_lambda}
	\lambda(x,\mu) = \underset{\lambda \in \mathbb{R}^m}{\textrm{argmax}} \; \mathcal{L}_{\beta}(x,z(\lambda,\mu),\lambda,\mu)
	= \mu + \rho (Ax-b), 
\end{equation}
from which we derive the $\lambda$-update step \eqref{eq:lambda_update}.

\subsection{Algorithm}
We present a first-order algorithm that utilizes the features of the P-Lagrangian to compute a stationary solution of problem \eqref{eq:op}. The steps of the proposed algorithm are outlined in Algorithm \ref{algorithm1}.

\begin{algorithm}
\caption{P-Lagrangian-Based First-Order Primal-Dual Algorithm.}  \label{algorithm1}

{{\bf Input:} $\alpha \gg 1$, $\beta \in (0,1)$, $\rho:=\frac{\alpha}{1+\alpha \beta}$, and $r \in (0.9,1)$, and $0 < \eta < \frac{1}{L_f +  \left(2 + \frac{1}{1+ \alpha \beta}\right) \rho \sigma_{\max}^2}$.}

{{\bf Initialization:} $(x_0,z_0,\lambda_0,\mu_0) \in \mathbb{R}^n \times \mathbb{R}^m \times \mathbb{R}^m \times \mathbb{R}^m$ and $\delta_0 \in (0,1]$.}
\vspace{0.025in}

\For {$k =0,1,2,\ldots$} 
{\begin{flalign}
& x_{k+1} = \underset{x \in \mathbb{R}^n}{\mathrm{argmin}} \left\lbrace  \left\langle \nabla_x \ell_{\beta}(x_k,z_k,\lambda_k,\mu_k), x - x_k \right\rangle + \frac{1}{2\eta} \| x - x_k \|^2 + h(x) \right\rbrace; & \label{eq:x_update}\\ 
& \mu_{k+1} = \mu_k + \tau_k (\lambda_k - \mu_k) \ \ \text{with} \ \ \tau_k =  \frac{\delta_k}{\| \lambda_k - \mu_k \|^2 + 1};  & \label{eq:mu_update} \\
& \lambda_{k+1} = \mu_{k+1} + \rho (Ax_{k+1} -b); & \label{eq:lambda_update} \\
& z_{k+1} = \frac{\lambda_{k+1} - \mu_{k+1}}{\alpha}; & \label{eq:z_update} \\
& \delta_{k+1} = r \delta_k.
\end{flalign}
}
\end{algorithm}  	
The exact minimization of $\mathcal{L}_{\beta}$ with respect to $x$ is challenging due to the nonconvexity of $f$. To address this, we employ an approximation $\widehat{\mathcal{L}}_{\beta}$ in $x$ at a point $y$ (see e.g., \citet{bolte2014proximal}):
\begin{equation} \label{eq:approx_PPL}
\widehat{\mathcal{L}}_{\beta} (x,z,\lambda,\mu;y) := \ell_{\beta}(y,z,\lambda,\mu)
+\left\langle \nabla _x \ell_{\beta}(y,z,\lambda,\mu), x - y \right\rangle  
+\frac{1}{2\eta}\| x - y \|^2 + h(x), 
\end{equation}
where $\ell_{\beta}$ represents the smooth part of $\mathcal{L}_{\beta}$:
\begin{equation} \label{eq:smooth_part_PPL}
	\ell_{\beta}(x, z, \lambda, \mu) :=  f(x) + \left\langle \lambda, Ax - b - z \right\rangle + \left\langle \mu, z \right\rangle + \frac{\alpha}{2}\| z \|^2 - \frac{\beta}{2}\| \lambda - \mu \|^2, \notag
\end{equation}
This approximation is so-called the proximal linearized approximation of $\mathcal{L}_{\beta}$ in $x$. 
Note that we can adopt alternative approximations for $\widehat{\mathcal{L}}_{\beta}$, depending on the problem data (see e.g., \citet{razaviyayn2013unified,scutari2016parallel,scutari2014decomposition}).

The first step of the algorithm is update $x$ by performing a minimization of  $\widehat{\mathcal{L}}_{\beta} (x,z_k,\lambda_k,\mu_k;x_k)$ in $x$ while keeping   $(z_k,\lambda_k,\mu_k)$ fixed:
\begin{equation} 
	x_{k+1} = \underset{x\in \mathbb{R}^n}{\mathrm{argmin}} \left\lbrace \left\langle \nabla_x \ell_{\beta}(x_k,z_k,\lambda_k,\mu_k), x-x_k \right\rangle  
	+\frac{1}{2\eta} \| x - x_k \|^2 + h(x)\right\rbrace, \notag
\end{equation}
which is known as the {\em proximal gradient map} and it can be rewritten as
\begin{equation}
	x_{k+1} = \text{prox}_{\eta h} \left[ x_k - \eta \nabla_x \ell_{\beta}(x_k,z_k,\lambda_k,\mu_k)\right]. \notag
\end{equation}

Next, the {\em auxiliary} multiplier $\mu$ is updated as follows:
\begin{equation} 
	\mu_{k+1} = \mu_k + \tau_k (\lambda_k-\mu_k). \notag
\end{equation}
Here, the step size $\tau_k$ defined by
$$\tau_k =  \frac{ \delta_k}{\| \lambda_k - \mu_k \|^2 + 1},$$
where $\delta_k = r^k \delta_0 $ and $r \in (0.9,1)$. It is important to note that $\delta_k =  r^k \delta_0 $ is a summable sequence, i.e., $\sum_{k=0}^{+ \infty} \delta_k < + \infty$. The key benefit of this choice of $\tau_k$ is that it guarantees that the multiplier sequence $\{\mu_k\}$ is bounded, which in turn ensures the boundedness of $\{ \lambda_k \}$ (see Lemma \ref{lem_mu_bound} below).

Then, for given $(x_{k+1}, \mu_{k+1})$, the multiplier $\lambda$ is updated by using \eqref{eq:opt_lambda}:
\begin{align} 
\lambda_{k+1} = \underset{\lambda\in\mathbb{R}^m}{\mathrm{argmax}}\left\lbrace f(x_{k+1}) + \left\langle \lambda, Ax_{k+1} - b \right\rangle - \frac{1}{2\rho}\| \lambda- \mu_{k+1} \|^2 + h(x_{k+1})\right\rbrace,     \notag 
\end{align}
equivalently,
\begin{equation} 
\lambda_{k+1} = \mu_{k+1} + \rho  (Ax_{k+1} -b). \notag
\end{equation}

The last step is to update $z$ using an exact minimization step on $\mathcal{L}_{\beta}$: 
\begin{equation} 
	z_{k+1} = \frac{\lambda_{k+1}-\mu_{k+1}}{\alpha}, \notag
\end{equation}
where $\alpha > 0$ is a fixed (false) penalty parameter.

\begin{lemma} \label{lem_mu_bound}
Let $\{(x_k, z_k, \lambda_k, \mu_k) \}$ be the sequence generated by Algorithm \ref{algorithm1}. Then,  the multiplier sequences $\{ \mu_k \}$ and $\{ \lambda_k \}$ are bounded. 
\end{lemma}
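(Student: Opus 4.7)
The plan is to establish boundedness of $\{\mu_k\}$ first by directly controlling the increments $\mu_{k+1}-\mu_k$, and then derive boundedness of $\{\lambda_k\}$ from the $\lambda$-update together with Assumption \ref{assumption_bounded_domain}.

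For the $\mu$-sequence, I would start from the update rule $\mu_{k+1} = \mu_k + \tau_k (\lambda_k - \mu_k)$ and simply compute
\[
\| \mu_{k+1} - \mu_k \| \;=\; \tau_k \,\| \lambda_k - \mu_k \| \;=\; \frac{\delta_k \,\| \lambda_k - \mu_k \|}{\| \lambda_k - \mu_k \|^2 + 1}.
\]
The key elementary inequality is $\frac{t}{t^2+1} \le \tfrac{1}{2}$ for all $t \ge 0$, which gives $\| \mu_{k+1} - \mu_k \| \le \delta_k/2$. Since $\delta_k = r^k \delta_0$ with $r \in (0.9,1)$, the sequence $\{\delta_k\}$ is summable, so $\sum_{k \ge 0}\| \mu_{k+1} - \mu_k\| < +\infty$. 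This makes $\{\mu_k\}$ a Cauchy sequence in $\mathbb{R}^m$ and therefore bounded; concretely $\|\mu_k\| \le \|\mu_0\| + \tfrac{\delta_0}{2(1-r)}$.

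For the $\lambda$-sequence, I would use \eqref{eq:lambda_update}: $\lambda_{k+1} = \mu_{k+1} + \rho(Ax_{k+1} - b)$. Since $x_{k+1}$ is defined as a proximal map of $h$, it lies in $\mathrm{dom}(h)$, which is compact by Assumption \ref{assumption_bounded_domain}. Hence $\{Ax_{k+1} - b\}$ is bounded (with bound controlled by $\sigma_{\max}$ and the diameter of $\mathrm{dom}(h)$), and combining with the uniform bound on $\{\mu_k\}$ already established, the triangle inequality yields boundedness of $\{\lambda_k\}$.

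I do not foresee a significant obstacle here: the whole point of the particular choice $\tau_k = \delta_k/(\|\lambda_k-\mu_k\|^2+1)$ is precisely to make the increments uniformly controlled by a summable sequence, independently of how large $\|\lambda_k - \mu_k\|$ might be. The only subtle point is to note that the bound on $\mu$ is obtained without any a priori boundedness assumption on $\lambda$, and that the bound on $\lambda$ then follows a posteriori from compactness of $\mathrm{dom}(h)$ rather than from the dual dynamics itself.
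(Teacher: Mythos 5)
Your proposal is correct and follows essentially the same argument as the paper: the bound $\tau_k\|\lambda_k-\mu_k\| = \delta_k\|\lambda_k-\mu_k\|/(\|\lambda_k-\mu_k\|^2+1) \le \delta_k/2$ is exactly the paper's AM--GM step, and summability of $\{\delta_k\}$ then bounds $\{\mu_k\}$, with $\{\lambda_k\}$ handled identically via the $\lambda$-update and compactness of $\mathrm{dom}(h)$. The only cosmetic difference is that you keep $\|\mu_0\|$ explicit rather than taking $\mu_0=0$ as the paper does.
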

\begin{proof}
From the $\mu$-update step \eqref{eq:mu_update} with  $\mu_0=0$, we directly deduce
\begin{align}
\| \mu_{k+1} \| 
& = \| \mu_0 + \sum^k_{t=0} \tau_k ( \lambda_t - \mu_t ) \|  
\leq  \sum^{+\infty}_{t=0}  \frac{\delta_t}{\| \lambda_t - \mu_t \|^2 + 1} \cdot \| \lambda_t - \mu_t \| \notag \\
& \leq  \sum^{+\infty}_{t=0}  \frac{\delta_t}{\| \lambda_t - \mu_t \|  + \frac{1}{\| \lambda_t - \mu_t \|}} 
\leq  \frac{1}{2}\sum^{ + \infty}_{t=0} \delta_t, \notag
\end{align} 
where in the last inequality, we used the fact that $a + b \geq 2\sqrt{ab}$ for any $a,b \geq 0$. Note that $\sum^{\infty}_{t=0} {\delta_t}$ is convergent with $\delta_t = r^t \delta_0$ and $r \in (0.9, 1)$. Hence, $\{ \mu_k \}$ is bounded. Given the update of $\lambda_{k+1} = \mu_{k+1} + \rho (A x_{k+1} -b)$, where $ \{A x_{k+1} - b\}$ is bounded over $\text{dom}(h)$ (Assumption \ref{assumption_bounded_domain}) and $\rho = \frac{\alpha}{1+\alpha \beta}$ is a constant, the sequence $\{\lambda_k\}$ is also bounded. 
\end{proof}

\begin{remark} \label{remark_step_mu}
 When updating the multiplier $\mu_k$, it is important to choose the reduction ratio $r$ close to 1 (e.g. 0.99 or even closer to 1 but less than 1). Choosing a small value of $r$ will cause the multiplier $\mu_k$ to reach a point quickly in a small number of iterations, which in turn may cause the multiplier $\lambda_k$ to stay far away from the multiplier $\lambda$ satisfying the KKT conditions \eqref{eq:def_kkt}. 
\end{remark}

\subsection{False Penalization with Dual Smoothing} \label{subsec:false_penalty}

The false penalty $\frac{\alpha}{2} \| z\|^2$ does not directly penalize constraint violation, unlike a typical penalty term. Instead, it guides the iterates $z_{k}$ towards convergence, helping to reduce constraint violation. Specifically, in the $z$-update step  \eqref{eq:z_update}, a large $\alpha > 0$ is chosen, causing $\frac{\alpha}{2} \| z\|^2$ to dominate $\left\langle \lambda - \mu, z\right\rangle$. This, along with the boundedness of $(\lambda - \mu)$ (Lemma \ref{lem_mu_bound}), leads to $\| z_{k+1} \| $ tending to a value close to 0. 

This is further facilitated by the dual smoothing term $-\frac{\beta}{2} \| \lambda - \mu_{k+1} \|^2$. In the $\lambda_{k+1}$-update step \eqref{eq:lambda_update}, $\lambda_{k+1}$ maximizes $\left\langle \lambda, Ax_{k+1} - b \right\rangle - \frac{1}{2\rho}\| \lambda- \mu_{k+1} \|^2$  exactly, which involves minimizing the strongly concave term $-\frac{1}{2\rho}\| \lambda- \mu_{k+1} \|^2$. This step encourages $\lambda_{k+1}$ to approach a point close to $\mu_{k+1}$, which in turn influences the update $z_{k+1} = \frac{\lambda_{k+1} - \mu_{k+1}}{\alpha} \left(= \frac{1}{1+\alpha\beta}(Ax_{k+1} -b) \right)$. As $\lambda_{k+1}$ gets closer to $\mu_{k+1}$, this term becomes smaller, driving $z_{k+1}$ closer to 0.

Therefore, there must exist a large $\alpha > 0$ such that for any $k \geq 0$
\[\|z_{k+1}\| \leq \alpha \| z_{k+1} - z_{k} \|.
\]
This inequality enables Algorithm \ref{algorithm1} to reduce infeasibility by controlling $\{x_{k+1} - x_{k}\}$ and $\{z_{k+1} - z_{k}\}$ with a sequence of nonnegatve values $\{\delta_k\}$ that decrease to 0 and the fixed (false) penalty parameter $\alpha > 0$ (see Theorem \ref{lem_lagrangian_properties}\ref{lem_one_iter_PPL}).

\section{Convergence Analysis} \label{analysis}

In this section, we present the convergence results of Algorithm \ref{algorithm1}. The structure of Algorithm \ref{algorithm1} allows us to establish its convergence properties in a simple way. 

Recall the Lipschitz continuity of $ \nabla_x \ell_{\beta}$. Noting that 
$\nabla_x \ell_{\beta}(x,z,\lambda,\mu) = \nabla f(x) + A^{\top} \lambda$, we have
\begin{align}
\| \nabla_x \ell_{\beta}(x_{k+1}) - \nabla_x \ell_{\beta}(x_k) \| 
\leq  \| \nabla f(x_{k+1}) - \nabla f(x_k) \| 
 \leq  L_f  \| x_{k+1} - x_k \|,  \notag
\end{align}
where $L_f$ denotes a Lipschitz constant, and we omitted $(z_k,\lambda_k,\mu_k)$ for simplicity. Then it follows from the descent lemma \cite[Proposition A.24]{bertsekas1999nonlinear} that the following inequality holds:
\begin{equation}\label{eq:descent_lemma}
\ell_{\beta}(x_{k+1}) \leq\ell_{\beta}(x_k) + \left\langle \nabla_x \ell_{\beta}(x_k), x_{k+1} - x_k \right\rangle + \frac{L_f}{2} \| x_{k+1} - x_k \|^2,
\end{equation}

Let us first provide basic yet crucial relations on the sequences $\{\lambda_k\}$, $\{\mu_k\}$, and $\{x_k\}$. These relations are key ingredients that enable convergence without relying on the surjectivity of the linear operator $A$.

\begin{lemma}\label{lem_iterates_relations} 
Let $\{(x_k,z_k, \lambda_k, \mu_k)\}$ be the sequence generated by Algorithm \ref{algorithm1}. Then for any $k \geq 0$,      
\begin{align}
\| \mu_{k+1} - \mu_k \|^2 & = \tau_k^2 \| \lambda_k - \mu_k \|^2 \leq {\delta_k^2}/{4}, \label{eq:lem_iter_rel_1-1} \\
\tau_k \| \lambda_k - \mu_k \|^2 & \leq \delta_k, \label{eq:lem_iter_rel_1-2} \\
\| \mu_{k+1} - \lambda_k \|^2 & = (1 - \tau_k)^2 \| \lambda_k - \mu_k \|^2,  \label{eq:lem_iter_rel_2} \\
\| \lambda_{k+1}-\lambda_k \|^2 & \leq 2 \rho^2 \sigma_{\max}^2  \| x_{k+1} - x_k \|^2 + {\delta_k^2}/{2}.\label{eq:lem_iter_rel_3} 
\end{align}
where $\rho=\frac{\alpha}{1+\alpha\beta}$ and $\sigma_{\max}$ denotes the largest singular value of the linear operator $A$.
\end{lemma}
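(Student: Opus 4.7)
The proof will be a direct calculation from the update formulas in Algorithm~\ref{algorithm1}, combined with two elementary inequalities: the AM--GM inequality $a^2+1 \geq 2|a|$ and the subadditivity $\|u+v\|^2 \leq 2\|u\|^2 + 2\|v\|^2$. There is no real analytical obstacle; the point of the lemma is bookkeeping that packages the iterate differences into forms usable in the subsequent descent and convergence analysis (in particular, to bound $\|\lambda_{k+1}-\lambda_k\|$ by a primal displacement plus a summable term, which is what lets the authors avoid the surjectivity assumption on $A$).

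For \eqref{eq:lem_iter_rel_1-1} I would simply subtract in the $\mu$-update \eqref{eq:mu_update} to get $\mu_{k+1}-\mu_k = \tau_k(\lambda_k-\mu_k)$, whence $\|\mu_{k+1}-\mu_k\|^2 = \tau_k^2\|\lambda_k-\mu_k\|^2$. For the upper bound by $\delta_k^2/4$, plug in $\tau_k = \delta_k/(\|\lambda_k-\mu_k\|^2+1)$ to obtain
\[
\tau_k\|\lambda_k-\mu_k\| \;=\; \frac{\delta_k\|\lambda_k-\mu_k\|}{\|\lambda_k-\mu_k\|^2+1} \;\leq\; \frac{\delta_k}{2},
\]
where the inequality uses $\|\lambda_k-\mu_k\|^2+1 \geq 2\|\lambda_k-\mu_k\|$ (AM--GM). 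Squaring gives the claim. For \eqref{eq:lem_iter_rel_1-2}, the same substitution yields
\[
\tau_k\|\lambda_k-\mu_k\|^2 \;=\; \frac{\delta_k \|\lambda_k-\mu_k\|^2}{\|\lambda_k-\mu_k\|^2+1} \;\leq\; \delta_k,
\]
since the quotient is bounded by $1$. For \eqref{eq:lem_iter_rel_2}, the $\mu$-update gives $\mu_{k+1}-\lambda_k = (\mu_k-\lambda_k) + \tau_k(\lambda_k-\mu_k) = -(1-\tau_k)(\lambda_k-\mu_k)$, so squaring is immediate.

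The one step that requires a small combination is \eqref{eq:lem_iter_rel_3}. Using the $\lambda$-update \eqref{eq:lambda_update} at both $k$ and $k+1$ (i.e.\ $\lambda_j = \mu_j + \rho(Ax_j-b)$), I would write
\[
\lambda_{k+1}-\lambda_k \;=\; (\mu_{k+1}-\mu_k) + \rho\,A(x_{k+1}-x_k),
\]
apply $\|u+v\|^2 \leq 2\|u\|^2+2\|v\|^2$, use $\|A(x_{k+1}-x_k)\| \leq \sigma_{\max}\|x_{k+1}-x_k\|$ on the second term, and invoke \eqref{eq:lem_iter_rel_1-1} on the first term to obtain
\[
\|\lambda_{k+1}-\lambda_k\|^2 \;\leq\; 2\rho^2\sigma_{\max}^2\|x_{k+1}-x_k\|^2 + 2\cdot \frac{\delta_k^2}{4} \;=\; 2\rho^2\sigma_{\max}^2\|x_{k+1}-x_k\|^2 + \frac{\delta_k^2}{2}.
\]
(For $k=0$ this implicitly assumes $\lambda_0 = \mu_0 + \rho(Ax_0-b)$, which one may take as part of the initialization; otherwise the estimate holds from $k\geq 1$ onward, which suffices for the subsequent asymptotic analysis.) This completes all four estimates.
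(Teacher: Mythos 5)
Your proof is correct and follows essentially the same route as the paper's: direct substitution into the update rules, the AM--GM bound $\|\lambda_k-\mu_k\|^2+1\geq 2\|\lambda_k-\mu_k\|$ for \eqref{eq:lem_iter_rel_1-1} and \eqref{eq:lem_iter_rel_1-2}, and the decomposition $\lambda_{k+1}-\lambda_k=(\mu_{k+1}-\mu_k)+\rho A(x_{k+1}-x_k)$ with $\|u+v\|^2\leq 2\|u\|^2+2\|v\|^2$ for \eqref{eq:lem_iter_rel_3}. Your parenthetical caveat about $k=0$ (that \eqref{eq:lem_iter_rel_3} requires $\lambda_0=\mu_0+\rho(Ax_0-b)$, which the paper's arbitrary initialization does not guarantee) is a valid observation the paper silently skips, but it is harmless for the downstream analysis, as you note.
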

	
\begin{proof}
It immediately follows from the $\mu$-update step \eqref{eq:mu_update} that  relations in  \eqref{eq:lem_iter_rel_1-1} holds: 
\begin{equation}
\| \mu_{k+1} - \mu_k \|^2 
= \tau_k^2 \| \lambda_k - \mu_k \|^2 
= \frac{\delta_k^2}{\| \lambda_k - \mu_k \|^2 + 2 + \frac{1}{\| \lambda_k - \mu_k \|^2}} \leq \frac{\delta_k^2}{4}. \notag
\end{equation} 
where the last inequality holds by the fact that $a + b \geq 2\sqrt{ab}$ for any $a,b \geq 0$. From the definition $\tau_k =  \frac{\delta_k}{\| \lambda_k - \mu_k \|^2 + 1} \leq 1 $ where $\delta_k \in (0,1]$, the relation \eqref{eq:lem_iter_rel_1-2} also directly follows:
\begin{equation} \label{eq:lem_iterates_relations_1_1}
	\tau_k \| \lambda_k - \mu_k \|^2 = 
	\frac{\delta_k}{1 + \frac{1}{\| \lambda_k - \mu_k \|^2} } \leq \delta_k. \notag
\end{equation}
Next, subtracting $\mu_{k+1}$ from $\lambda_k$, we get
\[ 
\| \lambda_k - \mu_{k+1}  \|
= \| \lambda_k - \mu_k - \tau_k ( \lambda_k -\mu_k )  \| 
= (1 - \tau_k) \| \lambda_k - \mu_k \|.
\] 
The squaring of both sides gives the relation \eqref{eq:lem_iter_rel_2}.
Finally, using the $\lambda$-update \eqref{eq:lambda_update} and the fact $(a+b)^2 \leq 2a^2+2b^2$ for any $a,b \in \mathbb{R}^m$, we have 
\begin{equation} \label{eq:lem_iterates_relations_e1}
	\| \lambda_{k+1}- \lambda_k \|^2 \leq 2\| \mu_{k+1}-\mu_k \|^2 + 2\rho^2 \sigma_{\max}^2 \| x_{k+1} -x_k \|^2. \notag
\end{equation}
Putting the above inequality and \eqref{eq:lem_iter_rel_1-1} together yields the relation \eqref{eq:lem_iter_rel_3}. 
\end{proof}

\subsection{Key Properties of Algorithm \ref{algorithm1}}
In this subsection, we provide key properties of Algorithm \ref{algorithm1}. For convenience, we often use the notation:
$\ww_{k} := (x_{k},z_{k},\lambda_{k},\mu_{k})$ for the sequence generated by Algorithm \ref{algorithm1}, where $k \geq 0$.

\begin{theorem} \label{lem_lagrangian_properties}
Suppose that Assumptions \ref{assumption_lipschitz} and \ref{assumption_bounded_domain} hold. Let $\left\{\ww_{k}:=(x_{k},z_{k},\lambda_{k},\mu_{k}) \right\}$ be the sequence generated by Algorithm \ref{algorithm1}  with the parameter $\eta$ set to satisfy the condition   
$$ 0 < \eta < \frac{1}{L_f +  \left(2 + \frac{1}{1+ \alpha \beta}\right) \rho \sigma_{\max}^2}.$$ Then, the following properties hold true:
\begin{enumerate}[itemsep=1ex, label=(\alph*), ref= (\alph*)]

\item \label{lem_one_iter_PPL}
it holds that for any $k \geq 0$, 
\begin{equation}
\mathcal{L}_{\beta}(\ww_{k+1}) - \mathcal{L}_{\beta}(\ww_{k}) 
\leq - \frac{1}{2} \left(\frac{1}{\eta} -  L_f -  \left(2 +  \frac{1}{1+ \alpha \beta}\right) \rho \sigma_{\max}^2 \right) \| x_{k+1} - x_k \|^2  - \frac{1}{2\alpha}\| z_{k+1} \|^2 + \widehat{\delta}_k, \notag
\end{equation}
where we set $\widehat{\delta}_k :=  \frac{\delta_k}{\rho} + \frac{\delta_k^2}{8{\rho}}$; 

\item \label{lem_lagrangian_convergence} 
the sequence $\{\mathcal{L}_{\beta}(\ww_{k})\}$ is bounded from below and convergent, i.e.,
$$\underset{k \rightarrow + \infty}{\lim} \mathcal{L}_{\beta}(\ww_{k+1}) := \underline{\mathcal{L}_{\beta}} > -\infty;$$

\item \label{lem_aymptotic_regular}
in addition, we have that $\sum_{k=0}^{\infty}\| x_{k+1} - x_k \|^2 < + \infty$ and $\sum_{k=0}^{\infty}\| z_{k+1} \|^2 < + \infty$, and hence
\begin{equation}\label{eq:aymptotic_result2}
\begin{aligned}
\underset{k\rightarrow +\infty}{\lim} \| x_{k+1} - x_k \| =0, \ 
\underset{k\rightarrow +\infty}{\lim} \| z_{k+1} - z_k \| =0, \ 
\underset{k\rightarrow +\infty}{\lim} \| \lambda_{k+1} - \lambda_k \| =0, 
\underset{k\rightarrow +\infty}{\lim} \| \lambda_{k+1} - \mu_{k+1} \| =0. \notag
\end{aligned}
\end{equation}
\end{enumerate}
\end{theorem}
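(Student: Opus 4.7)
The goal is to establish the descent-type inequality (a), from which the boundedness/convergence (b) and the asymptotic regularity (c) follow by standard telescoping arguments leveraging the summability of $\delta_k = r^k \delta_0$.

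My plan for (a) is to decompose the one-iteration change into four sequential block updates, $\ww_k \to (x_{k+1}, z_k, \lambda_k, \mu_k) \to (x_{k+1}, z_k, \lambda_k, \mu_{k+1}) \to (x_{k+1}, z_k, \lambda_{k+1}, \mu_{k+1}) \to \ww_{k+1}$, and bound each term. For the $x$-step I combine the descent lemma \eqref{eq:descent_lemma} with the $\tfrac{1}{\eta}$-strong convexity of the proximal subproblem \eqref{eq:x_update}; testing the first-order optimality of $x_{k+1}$ against $x_k$ yields the descent $-\bigl(\tfrac{1}{\eta}-\tfrac{L_f}{2}\bigr)\|x_{k+1}-x_k\|^2$. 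The $\mu$-step is an ascent direction; using $\mu_{k+1}-\mu_k=\tau_k(\lambda_k-\mu_k)$ together with the identity $z_k=(\lambda_k-\mu_k)/\alpha$ (valid for $k\geq 1$) and the key bound \eqref{eq:lem_iter_rel_1-2} from Lemma \ref{lem_iterates_relations}, I can control this ascent by $\delta_k/\rho$. The $\lambda$-step is also an ascent: since $\lambda_{k+1}$ exactly maximizes the reduced Lagrangian $\mathcal{L}_\beta(x_{k+1},z(\lambda,\mu_{k+1}),\lambda,\mu_{k+1})$, the incremental change equals $\tfrac{1}{2\rho}\|\lambda_{k+1}-\lambda_k\|^2$, which I convert into $\|x_{k+1}-x_k\|^2$ and $\delta_k^2$ via the identity $\lambda_{k+1}-\lambda_k = \rho A(x_{k+1}-x_k) + (\mu_{k+1}-\mu_k)$ combined with Young's inequality and \eqref{eq:lem_iter_rel_1-1}. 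Finally, the $z$-step exploits the $\alpha$-strong convexity of $\mathcal{L}_\beta$ in $z$ and $z_{k+1}=(\lambda_{k+1}-\mu_{k+1})/\alpha$ being the exact minimizer, producing a negative quadratic from which the stated term $-\tfrac{1}{2\alpha}\|z_{k+1}\|^2$ (a weakened form of the $-\tfrac{\alpha}{2}\|z_{k+1}-z_k\|^2$ decrement) can be extracted.

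The main obstacle I expect is the precise bookkeeping to match the stated coefficient $-\tfrac{1}{2}\bigl(\tfrac{1}{\eta}-L_f-(2+\tfrac{1}{1+\alpha\beta})\rho\sigma_{\max}^2\bigr)$ on $\|x_{k+1}-x_k\|^2$ and the residual $\widehat{\delta}_k=\tfrac{\delta_k}{\rho}+\tfrac{\delta_k^2}{8\rho}$: the Young weight used to split $\|\lambda_{k+1}-\lambda_k\|^2$ must be tuned (essentially $c=1+\tfrac{1}{1+\alpha\beta}$) so that the $\rho\sigma_{\max}^2$ coefficient and the $\delta_k^2/(8\rho)$ residual emerge simultaneously while leaving enough slack in the $x$-descent to absorb the $\|z_{k+1}\|^2$ bookkeeping. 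All the ascending pieces are ultimately paid for by the two strongly convex steps in $x$ and $z$, and the summable step-size schedule $\tau_k\|\lambda_k-\mu_k\|^2\leq\delta_k$ is the feature that makes this possible without requiring surjectivity of $A$.

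Given (a), part (b) is proved as follows. For boundedness from below, I use the identity $\mathcal{L}_\beta(\ww_k) = f(x_k)+h(x_k)+\langle \mu_k, Ax_k-b\rangle + \tfrac{\rho}{2}\|Ax_k-b\|^2$, valid for $k\geq 1$ by substitution of $z_k$ and $\lambda_k$; this quantity is bounded below because $f+h$ is continuous and $\mathrm{dom}(h)$ is compact (Assumption \ref{assumption_bounded_domain}), $\{\mu_k\}$ is bounded (Lemma \ref{lem_mu_bound}), $Ax_k-b$ is bounded over $\mathrm{dom}(h)$, and $\tfrac{\rho}{2}\|Ax_k-b\|^2\geq 0$. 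Convergence of $\{\mathcal{L}_\beta(\ww_k)\}$ is then obtained by forming the shifted sequence $\widetilde{L}_k:=\mathcal{L}_\beta(\ww_k)+\sum_{t\geq k}\widehat{\delta}_t$, which is monotonically non-increasing by (a), bounded below, hence convergent; since $\sum\widehat{\delta}_t<\infty$ and $\widehat{\delta}_k\to 0$, $\mathcal{L}_\beta(\ww_k)$ converges to the same limit.

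For (c), summing (a) from $k=0$ to $K-1$ and invoking (b) together with $\sum_k\widehat{\delta}_k<\infty$ yields $\sum_k\|x_{k+1}-x_k\|^2<\infty$ and $\sum_k\|z_{k+1}\|^2<\infty$, so $\|x_{k+1}-x_k\|\to 0$ and $\|z_{k+1}\|\to 0$; the triangle inequality then gives $\|z_{k+1}-z_k\|\to 0$. The convergence $\|\lambda_{k+1}-\lambda_k\|\to 0$ is immediate from \eqref{eq:lem_iter_rel_3}, and $\|\lambda_{k+1}-\mu_{k+1}\|\to 0$ follows from $\lambda_{k+1}-\mu_{k+1}=\alpha z_{k+1}$ and $\|z_{k+1}\|\to 0$.
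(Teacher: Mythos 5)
Your proposal is correct and follows essentially the same route as the paper: a telescoping block decomposition (the paper merges your $\mu$- and $\lambda$-steps into one), the descent lemma plus prox-optimality for the $x$-step, the summable bound $\tau_k\|\lambda_k-\mu_k\|^2\le\delta_k$ to control the dual ascent by $\delta_k/\rho$, the $\alpha$-strong convexity in $z$ together with the paper's $\|z_{k+1}\|\le\alpha\|z_{k+1}-z_k\|$ device, the same closed-form rewriting of $\mathcal{L}_\beta(\ww_{k+1})$ for lower boundedness, and the same telescoping for (c). Your shifted sequence $\widetilde{L}_k=\mathcal{L}_\beta(\ww_k)+\sum_{t\ge k}\widehat{\delta}_t$ is in fact a cleaner justification of convergence in (b) than the paper's informal ``approximately nonincreasing'' argument, and your restriction of $z_k=(\lambda_k-\mu_k)/\alpha$ to $k\ge1$ is a detail the paper glosses over.
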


\begin{proof}
\ref{lem_one_iter_PPL}
The difference between two consecutive sequences of $\mathcal{L}_{\beta}$ can be decomposed into three parts as follows:
\begin{subequations}
\begin{align}
	\mathcal{L}_{\beta}(\ww_{k+1}) - \mathcal{L}_{\beta}(\ww_k) 
	& = \left[  \mathcal{L}_{\beta}(x_{k+1},z_{k},\lambda_{k},\mu_{k}) -        \mathcal{L}_{\beta}(x_k,z_k,\lambda_{k},\mu_{k}) \right] \label{eq:lem_one_iter_PPL1} \\
	& \quad + \left[  \mathcal{L}_{\beta}(x_{k+1},z_{k},\lambda_{k+1},\mu_{k+1}) -        \mathcal{L}_{\beta}(x_{k+1},z_k,\lambda_k,\mu_k) \right] \label{eq:lem_one_iter_PPL2} \\
	& \quad + \left[ \mathcal{L}_{\beta}(x_{k+1},z_{k+1},\lambda_{k+1},\mu_{k+1}) - \mathcal{L}_{\beta}(x_{k+1},z_{k},\lambda_{k+1},\mu_{k+1})\right]. \label{eq:lem_one_iter_PPL3}
\end{align}
\end{subequations}		

For the first part \eqref{eq:lem_one_iter_PPL1}, writing $\mathcal{L}_{\beta}(x_{k+1})=\mathcal{L}_{\beta}(x_{k+1}, z_k, \lambda_k, \mu_k)$ and using \eqref{eq:descent_lemma}, we have
\begin{equation} \label{eq:lem_one_iter_ppl_p1_1}
\begin{aligned} 
	\mathcal{L}_{\beta}(x_{k+1})=\ell_{\beta}(x_{k+1}) + h(x_{k+1})
	\leq \ell_{\beta}(x_k)
	+ \left\langle \nabla_x \ell_{\beta}(x_k), x_{k+1}- x_k \right\rangle  + \frac{L_f}{2} \| x_{k+1} - x_k \|^2 +h(x_{k+1}),
\end{aligned}
\end{equation}   
From the definition $x_{k+1}=\mathrm{argmin}_{x \in \mathbb{R}^n}  \widehat{\mathcal{L}}_{\beta}(x;x_k)$ in \eqref{eq:x_update}, it follows that
\begin{displaymath}
\begin{aligned}
	\widehat{\mathcal{L}}_{\beta}(x_{k+1}; x_k) 
	& = \ell_{\beta}(x_k) + \left\langle \nabla_x \ell_{\beta}(x_k), x_{k+1} - x_k \right\rangle + \frac{1}{2\eta}\| x_{k+1} - x_k \|^2 + h(x_{k+1}) \\
	& \leq  \widehat{\mathcal{L}}_{\beta}(x_k;x_k) = \mathcal{L}_{\beta}(x_{k}) = \ell_{\beta}(x_k) + h(x_k),
\end{aligned}
\end{displaymath}
implying that
$$\left\langle \nabla_x \ell_{\beta}(x_k), x_{k+1} - x_k \right\rangle + h(x_{k+1}) \leq -\frac{1}{2\eta}\| x_{k+1} - x_k\|^2 + h(x_k).$$ Combining the above expression with \eqref{eq:lem_one_iter_ppl_p1_1} yields 
\begin{equation} \label{eq:lem_one_iter_ppl_p1}
	\mathcal{L}_{\beta}(x_{k+1},z_k,\lambda_k,\mu_k) 
    -  \mathcal{L}_{\beta}(x_k,z_k,\lambda_k,\mu_k) \leq - \frac{1}{2}\left(\frac{1}{\eta} -L_f \right) \| x_{k+1} - x_k \|^2. 
\end{equation}

Next, we derive an upper bound for the second part \eqref{eq:lem_one_iter_PPL2}. We start by noting that
\begin{equation} \label{eq:lem_one_iter_ppl_p2_1}
\begin{aligned}
& \mathcal{L}_{\beta}(x_{k+1},z_{k},\lambda_{k+1},\mu_{k+1}) - \mathcal{L}_{\beta}(x_{k+1},z_{k},\lambda_k,\mu_k)   \\ 
& = 
\underbrace{\left\langle \lambda_{k+1} - \lambda_k, Ax_{k+1} -b  \right\rangle}_{\mathrm{(A)}} 
+ \underbrace{\left\langle (\lambda_k - \mu_k) - (\lambda_{k+1} - \mu_{k+1}), z_{k} \right\rangle}_{\mathrm{(B)}} \\
& \ \quad - \frac{\beta}{2} \| \lambda_{k+1} - \mu_{k+1} \|^2 + \frac{\beta}{2} \| \lambda_k - \mu_k \|^2.  
\end{aligned}
\end{equation} 
By using the updating steps \eqref{eq:lambda_update} and \eqref{eq:z_update}, $\lambda_{k+1} - \mu_{k+1} = \rho (Ax_{k+1} - b)$ and $z_k = \frac{1}{\alpha}(\lambda_k - \mu_k)$, and applying the identity $\left\langle a-b, a \right\rangle =\frac{1}{2} \| a-b  \|^2 + \frac{1}{2} \| a \|^2 - \frac{1}{2} \| b \|^2$ to (A) and (B) with $a=\lambda_{k} -\mu_{k}$ and $b=\lambda_{k+1} -\mu_{k+1}$, we have
\begin{align}  
\mathrm{(A)} 
&= \frac{1}{2\rho} \| \lambda_{k+1}-\lambda_k \|^2 
 + \frac{1}{2\rho} \| \lambda_{k+1} -\mu_{k+1} \|^2 
 - \frac{1}{2\rho} \| \mu_{k+1} -\lambda_k \|^2, \label{eq:lem_one_iter_ppl_p2_2} \\
\mathrm{(B)} 
& \leq  \frac{\rho^2\sigma_{\max}^2}{2\alpha} \| x_{k+1} - x_{k} \|^2 
 + \frac{1}{2\alpha} \| \lambda_{k} -\mu_{k} \|^2 
 - \frac{1}{2\alpha} \| \lambda_{k+1} -\mu_{k+1} \|^2. 
\label{eq:lem_one_iter_ppl_p2_3}
\end{align}
Substituting \eqref{eq:lem_one_iter_ppl_p2_2} and \eqref{eq:lem_one_iter_ppl_p2_3} into \eqref{eq:lem_one_iter_ppl_p2_1} and rearranging terms yields
\begin{align}
& \ \ \ \mathcal{L}_{\beta} (x_{k+1},z_k,\lambda_{k+1},\mu_{k+1}) - \mathcal{L}_{\beta} (x_{k+1},z_k,\lambda_k,\mu_k)  \notag \\
& \ \ \ \leq 
\frac{1}{2\rho} \| \lambda_{k+1} - \lambda_k \|^2 
+ \frac{\rho^2\sigma_{\max}^2}{2\alpha} \| x_{k+1} - x_{k} \|^2 - \frac{1}{2\rho} \| \mu_{k+1} - \lambda_k \|^2 + \frac{1}{2\rho} \| \lambda_k - \mu_k \|^2 
 \notag \\
& \overset{\eqref{eq:lem_iter_rel_2},\eqref{eq:lem_iter_rel_3}}{\leq}  
\frac{1}{2\rho} \left( 2 \rho^2 \sigma_{\max}^2  \| x_{k+1} - x_k \|^2 + 2 \|\mu_{k+1} -\mu_{k} \|^2 \right) + \frac{\rho\sigma_{\max}^2}{2(1+\alpha\beta)} \| x_{k+1} - x_{k} \|^2 + \frac{1}{2\rho}\left( 2\tau_k - \tau_k^2 \right) \| \lambda_k - \mu_k \|^2   \notag  \\
& \overset{\eqref{eq:lem_iter_rel_1-1},\eqref{eq:lem_iter_rel_1-2}}{\leq} 
\frac{1}{2} \left( 2 + \frac{1}{1+ \alpha \beta}\right) \rho \sigma_{\max}^2 \| x_{k+1} - x_k \|^2 + \frac{\delta_k^2}{8\rho} + \frac{\delta_k}{\rho}.  \label{eq:lem_one_iter_ppl_p2}
\end{align}	

For the third part \eqref{eq:lem_one_iter_PPL3}, 
notice that  $\nabla_{z}\mathcal{L}_{\beta}(x_{k+1},z_{k+1},\lambda_{k+1},\mu_{k+1}) = 0$ because $z_{k+1}$ minimizes $\mathcal{L}_{\beta}(x_{k+1},z,\lambda_{k+1},\mu_{k+1})$.  Hence, by the $\alpha$-strong convexity of $\mathcal{L}_{\beta}$ in $z$, we have
\begin{align} 
	 \mathcal{L}_{\beta} (x_{k+1},z_{k+1},\lambda_{k+1},\mu_{k+1}) - \mathcal{L}_{\beta} (x_{k+1},z_{k},\lambda_{k+1},\mu_{k+1})
	\leq - \frac{\alpha}{2}\| z_{k+1} - z_k \|^2 \leq - \frac{1}{2\alpha}\| z_{k+1} \|^2,                	\label{eq:lem_one_iter_ppl_p3}
\end{align}
where the last inequality follows from the existence of  a large $\alpha>0$ such that $\|z_{k+1} \| \leq  \alpha \|z_{k+1} -z_{k}\|$ for any $k \geq 0$, which implies that 
$- \frac{\alpha}{2}\| z_{k+1} - z_k \|^2 \leq - \frac{1}{2\alpha}\| z_{k+1} \|^2.
$

Combining \eqref{eq:lem_one_iter_ppl_p1}, \eqref{eq:lem_one_iter_ppl_p2}, and \eqref{eq:lem_one_iter_ppl_p3} yields the desired result:
\begin{align} 
\mathcal{L}_{\beta}(\ww_{k+1}) 
\leq \mathcal{L}_{\beta}(\ww_k)  
-\frac{1}{2}\left(\frac{1}{\eta} - L_f  - \left(2 + \frac{1}{1+ \alpha \beta}\right) \rho \sigma_{\max}^2 \right) \| x_{k+1} - x_k \|^2 - \frac{1}{2\alpha}\| z_{k+1} \|^2 + \frac{\delta_k^2}{8\rho} + \frac{\delta_k}{\rho}. \notag
\end{align}

\ref{lem_lagrangian_convergence}
Using the updates for $\lambda_{k+1}$ and $z_{k+1}$, and $\left\langle a, b \right\rangle =\frac{1}{2} \| a \|^2 + \frac{1}{2} \| b \|^2 - \frac{1}{2} \| a - b \|^2$, we have
\[\begin{aligned} 
& \mathcal{L}_{\beta}(\ww_{k+1}) \\    
& = f(x_{k+1})  + \left\langle \lambda_{k+1}, Ax_{k+1} - b \right\rangle - \left\langle \lambda_{k+1}  - \mu_{k+1}, z_{k+1} \right\rangle + \frac{\alpha}{2} \| z_{k+1} \|^2  - \frac{\beta}{2} \| \lambda_{k+1} - \mu_{k+1} \|^2  + h(x_{k+1}) \\   
& = f(x_{k+1})  + \frac{1}{\rho}\left\langle \lambda_{k+1},\lambda_{k+1} - \mu_{k+1} \right\rangle 
- \frac{1}{2\rho} \| \lambda_{k+1} - \mu_{k+1}\|^2  + h(x_{k+1}) \\
& = f(x_{k+1}) + \frac{1}{2\rho} \| \lambda_{k+1} \|^2  - \frac{1}{2\rho}\|  \mu_{k+1} \|^2 + h(x_{k+1}) 
 > -\infty,        
\end{aligned}\]	
where the last inequality holds by the boundedness of $\{\mu_k\}$ (Lemma \ref{lem_mu_bound}) and the lower boundedness of $f$ and $g$ over  $\text{dom}(h)$ (Assumption \ref{assumption_bounded_domain}). Therefore, $\mathcal{L}_{\beta}(\ww_{k+1}) > - \infty$. 

Moreover, given that $0 < \eta < \frac{1}{L_f +  \left(2 + \frac{1}{1+ \alpha \beta}\right) \rho \sigma_{\max}^2}$, we can see from the result \ref{lem_one_iter_PPL} that the sequence $\{\mathcal{L}_{\beta}(\ww_{k+1})\}$ is {\em approximately nonincreasing} (similar to the notation in \citet[Definition 2]{gur2023convergent}). This means that although it may not decrease monotonically at every step, it tends to decrease over iterations in the sense that $\delta_k$ goes to 0 as $k \rightarrow \infty$. Thus, it converges to a finite value $\underline{\mathcal{L}_{\beta}}$:
\[
\underset{k\rightarrow +\infty}{\lim} \mathcal{L}_{\beta}(\ww_{k+1}) = \underline{\mathcal{L}_{\beta}} > -\infty.
\]

\ref{lem_aymptotic_regular}
It follows from the result \ref{lem_one_iter_PPL} that 
\begin{equation}\label{eq:aymptotic_e1}
	{\gamma} \| x_{k+1} - x_k \|^2  +\frac{1}{2\alpha} \| z_{k+1} \|^2 \leq
	\mathcal{L}_{\beta}(\ww_k) - \mathcal{L}_{\beta}(\ww_{k+1}) + \widehat{\delta}_k,
\end{equation}
where $\gamma := \frac{1}{2}\left( \frac{1}{\eta} -  L_f - \left(2 + \frac{1}{1+ \alpha \beta}\right) {\rho \sigma_{\max}^2} \right) > 0$. Defining $c_1 := \min\left\lbrace \gamma, \frac{1}{2\alpha} \right\rbrace$ and summing \eqref{eq:aymptotic_e1} from $k=0$ to $k=T-1$, we have
\begin{align}
\sum_{k=0}^{T-1} \left(\| x_{k+1} - x_k \|^2 + \| z_{k+1}\|^2 \right)& \leq  \frac{1}{c_1} \left( \mathcal{L}_{\beta}(\ww_0) - \mathcal{L}_{\beta}(\ww_{T}) +  \sum_{k=0}^{T-1} \widehat{\delta}_k \right) \notag\\
& \leq   \frac{1}{c_1} \left( \mathcal{L}_{\beta}(\ww_0) -	\underline{\mathcal{L}_{\beta}} + \sum_{k=0}^{T-1} \widehat{\delta}_k \right), \label{eq:aymptotic_e2}
 \end{align}
where the last inequality is due to the lower boundedness of $\mathcal{L}_{\beta}(\ww_k)$. Since $ \sum_{k=0}^{\infty} \delta_k  \leq \frac{\delta_0}{2(1-r)} < + \infty $ and $ \sum_{k=0}^{\infty} \delta_k^2  \leq \frac{\delta_0^2}{2(1-r^2)} < + \infty$, then by taking the limit as $T \rightarrow + \infty$, we deduce
\[
\sum_{k=0}^{ + \infty}\| x_{k+1} - x_k \|^2 < + \infty \ \ \text{and} \ \ \sum_{k=0}^{ + \infty}\| z_{k+1} \|^2 = \frac{1}{\alpha}\sum_{k=0}^{ + \infty}\| \lambda_{k+1} - \mu_{k+1} \|^2< + \infty.
\]
From the $\lambda$-update \eqref{eq:lambda_update} and $\alpha z_{k+1} = \rho (Ax_{k+1} -b)$, we also obtain 
\[
\begin{aligned}
\sum_{k=0}^{+ \infty}\| z_{k+1} - z_k \|^2
& \leq \frac{\rho^2 \sigma_{\max}^2}{\alpha^2} \sum_{k=0}^{+ \infty} \| x_{k+1} - x_k \|^2 < + \infty, \\
\sum_{k=0}^{+ \infty}\| \lambda_{k+1} - \lambda_k \|^2  
& \leq 
2\rho^2 \sigma_{\max}^2 \sum_{k=0}^{+ \infty} \| x_{k+1} - x_k \|^2 + \frac{1}{2}\sum_{k=0}^{+ \infty}  \delta_k^2 < + \infty, 
\end{aligned}
\] 
Consequently, the desired results  immediately follow. 
\end{proof}

In order to measure the progress of Algorithm \ref{algorithm1}, we use the size of the subgradient of $\mathcal{L}_{\beta}$, denoted by $\partial \mathcal{L}_{\beta}$. The next lemma provides an error bound for $\partial \mathcal{L}_{\beta}$ in terms of the primal iterates with a sequence of nonnegative scalars $\{ \delta_k \}$ that tends to 0.

\begin{lemma}[Error bound for $\partial \mathcal{L}_{\beta}$]\label{lem_upper_bound_gradient}
Suppose that Assumptions \ref{assumption_lipschitz} and \ref{assumption_bounded_domain} hold, and let $\{\ww_k \}$ be the sequence generated by Algorithm \ref{algorithm1}. Then,  for every $k \geq 0$, there exists $c_2 > 0$ such that
\begin{equation}
\mathrm{dist} \left( 0, \partial \mathcal{L}_{\beta} (\ww_{k+1}) \right) \leq 
c_2 \left( \| x_{k+1} -x_k \| +  \| z_{k+1} \| \right) + \sigma_{\max} \delta_k,   \notag
\end{equation}
where
$c_2 = \max\left\lbrace L_f + \rho \sigma_{\max}^2 + {1}/{\eta}, 1+\alpha\beta \right\rbrace.$
\end{lemma}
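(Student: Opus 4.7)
The plan is to compute a specific element in $\partial \mathcal{L}_{\beta}(\ww_{k+1})$ component by component and bound its norm. Since $\mathcal{L}_{\beta}$ is smooth in $(z,\lambda,\mu)$ and of the form ``smooth $+\,h(x)$'' in $x$, we have
\[
\partial \mathcal{L}_{\beta}(\ww_{k+1}) = \bigl(\nabla f(x_{k+1}) + A^{\top}\lambda_{k+1} + \partial h(x_{k+1})\bigr) \times \{\nabla_z \mathcal{L}_{\beta}\} \times \{\nabla_{\lambda}\mathcal{L}_{\beta}\} \times \{\nabla_{\mu}\mathcal{L}_{\beta}\}.
\]
So I will first check the three smooth partial gradients at $\ww_{k+1}$, then exhibit a specific subgradient in the $x$-block.

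For the $z$-block, $\nabla_z \mathcal{L}_{\beta}(\ww_{k+1}) = -\lambda_{k+1}+\mu_{k+1}+\alpha z_{k+1}$ vanishes by the $z$-update \eqref{eq:z_update}. For the $\lambda$-block, $\nabla_{\lambda}\mathcal{L}_{\beta}(\ww_{k+1}) = Ax_{k+1}-b - z_{k+1} - \beta(\lambda_{k+1}-\mu_{k+1})$; substituting $Ax_{k+1}-b=(\lambda_{k+1}-\mu_{k+1})/\rho$ from \eqref{eq:lambda_update} and $z_{k+1}=(\lambda_{k+1}-\mu_{k+1})/\alpha$ from \eqref{eq:z_update}, and using the identity $1/\rho = 1/\alpha+\beta$, this block also vanishes. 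For the $\mu$-block, $\nabla_{\mu}\mathcal{L}_{\beta}(\ww_{k+1}) = z_{k+1}+\beta(\lambda_{k+1}-\mu_{k+1}) = (1+\alpha\beta)z_{k+1}$ after using $\lambda_{k+1}-\mu_{k+1}=\alpha z_{k+1}$; hence its norm is $(1+\alpha\beta)\|z_{k+1}\|$.

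For the $x$-block, the first-order optimality condition of the proximal step \eqref{eq:x_update} reads
\[
\tfrac{1}{\eta}(x_k - x_{k+1}) - \nabla f(x_k) - A^{\top}\lambda_k \in \partial h(x_{k+1}).
\]
Using this as the subgradient element, the candidate $x$-block subgradient of $\mathcal{L}_{\beta}$ at $\ww_{k+1}$ becomes
\[
g_x := \bigl(\nabla f(x_{k+1})-\nabla f(x_k)\bigr) + A^{\top}(\lambda_{k+1}-\lambda_k) + \tfrac{1}{\eta}(x_k-x_{k+1}).
\]
By $L_f$-Lipschitz continuity of $\nabla f$ and the triangle inequality,
\[
\|g_x\| \le \bigl(L_f + \tfrac{1}{\eta}\bigr)\|x_{k+1}-x_k\| + \sigma_{\max}\|\lambda_{k+1}-\lambda_k\|.
\]
To finish I use the decomposition $\lambda_{k+1}-\lambda_k = (\mu_{k+1}-\mu_k) + \rho A(x_{k+1}-x_k)$ directly implied by \eqref{eq:lambda_update}, together with \eqref{eq:lem_iter_rel_1-1} which gives $\|\mu_{k+1}-\mu_k\|\le \delta_k/2$, so $\|\lambda_{k+1}-\lambda_k\|\le \rho\sigma_{\max}\|x_{k+1}-x_k\| + \delta_k/2$. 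Plugging this in yields $\|g_x\| \le (L_f+\rho\sigma_{\max}^2+1/\eta)\|x_{k+1}-x_k\| + \sigma_{\max}\delta_k/2$.

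Combining the four blocks via $\mathrm{dist}(0,\partial\mathcal{L}_{\beta}(\ww_{k+1}))\le \|g_x\|+\|\nabla_{\mu}\mathcal{L}_{\beta}(\ww_{k+1})\|$ (since the $z$- and $\lambda$-blocks are zero) and taking $c_2 = \max\{L_f+\rho\sigma_{\max}^2+1/\eta,\ 1+\alpha\beta\}$ produces the claimed bound, with an even slightly stronger coefficient $\sigma_{\max}\delta_k/2$ on the $\delta_k$ term. The only non-routine step is realizing that one should use the decomposition $\lambda_{k+1}-\lambda_k=(\mu_{k+1}-\mu_k)+\rho A(x_{k+1}-x_k)$ (rather than the squared bound \eqref{eq:lem_iter_rel_3}) to get a linear-in-$\delta_k$ error instead of $\sqrt{\delta_k^2/2}$ mixed with a cross term; everything else is bookkeeping with the update rules.
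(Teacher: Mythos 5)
Your proof is correct and follows essentially the same route as the paper: it verifies that the $z$- and $\lambda$-blocks of the gradient vanish, identifies $\nabla_\mu\mathcal{L}_\beta(\ww_{k+1})=(1+\alpha\beta)z_{k+1}$, and builds the $x$-block subgradient from the proximal step's optimality condition, bounding $\lambda_{k+1}-\lambda_k$ via $(\mu_{k+1}-\mu_k)+\rho A(x_{k+1}-x_k)$ exactly as the paper does. The only (harmless) difference is your sharper constant $\sigma_{\max}\delta_k/2$ from using $\|\mu_{k+1}-\mu_k\|\le\delta_k/2$ where the paper uses the looser bound $\delta_k$.
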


\begin{proof}
Writing the optimality condition for the $x$-update \eqref{eq:x_update}, we have that for every $k \geq 0$
\begin{equation} \label{eq:subgradient_for_x_1}
 \nabla_x \ell_{\beta} (\ww_k) + \frac{1}{\eta}(x_{k+1} - x_k) + u_{k+1} = 0, \quad u_{k+1} \in \partial h(x_{k+1}).
\end{equation}
On the other hand, using subdifferential calculus rules, we have  
\begin{equation} \label{eq:subgradient_for_x_2}
\nabla_x \ell_{\beta} (\ww_{k+1}) + u_{k+1} \in \partial_x \mathcal{L}_{\beta}(\ww_{k+1}).
\end{equation}
Hence, by defining the quantity
\begin{align}
d_{1,k+1} := \nabla_x \ell_{\beta} (\ww_{k+1}) -\nabla_x \ell_{\beta} (\ww_{k})+ \frac{1}{\eta}(x_k - x_{k+1}), 
\notag
\end{align}
and using \eqref{eq:subgradient_for_x_1} and \eqref{eq:subgradient_for_x_2}, we obtain
\begin{equation}
d_{1,k+1} \in \partial_x \mathcal{L}_{\beta}(\ww_{k+1}). \notag
\end{equation}
From the $\lambda$-update \eqref{eq:lambda_update} and the $z$-update \eqref{eq:z_update}, it immediately follows that 
\begin{align}                    
\nabla_{\lambda}\mathcal{L}_{\beta}(\ww_{k+1}) & =  (Ax_{k+1} -b) - z_{k+1} - \beta(\lambda_{k+1} -\mu_{k+1}) = 0,   \notag    \\
\nabla_{z}\mathcal{L}_{\beta}(\ww_{k+1})       & = \alpha z_{k+1} - (\lambda_{k+1} -\mu_{k+1}) = 0. \notag    
\end{align}
Define $d_{2,k+1} := (1 + \alpha\beta)z_{k+1}$. Noting that $\lambda_{k+1} - \mu_{k+1} = \alpha z_{k+1}$ and $\rho = \frac{\alpha}{1+\alpha\beta}$,  we obtain
\begin{align}
\nabla_\mu \mathcal{L}_{\beta} (\ww_{k+1}) 
= \frac{\lambda_{k+1} - \mu_{k+1}}{\rho} 
= \frac{\alpha z_{k+1}}{\rho} 
= d_{2,k+1}.  \notag
\end{align}
Hence, we have that for every $k \geq 0$
\begin{equation}
{\bf d}_{k+1} := (d_{1,k+1}, 0, 0, d_{2,k+1}) \in \partial \mathcal{L}_{\beta}(\ww_{k+1}). \notag   
\end{equation}
Now, by using the $\lambda$-update  \eqref{eq:lambda_update} and the fact that $\| \mu_{k+1} - \mu_k \| = \frac{\delta_k}{\| \lambda_k - \mu_k \| + \frac{1}{\| \lambda_k - \mu_k \|}} \leq {\delta_k}$, we obtain 
\begin{align}
\| d_{1,k+1} \| 
& = \| \nabla f(x_{k+1}) -\nabla f(x_k) \| + (1/\eta) \| x_k - x_{k+1} \| + \sigma_{\max} \| \lambda_{k+1}  - \lambda_k  \|, \notag \\
& \leq \left(L_f + 1/\eta\right) \| x_{k+1} - x_k \|  + \rho \sigma_{\max}^2  \| x_{k+1} - x_k \| + \sigma_{\max} \| \mu_{k+1} - \mu_k \| \notag \\
& \leq \left(L_f + \rho \sigma_{\max}^2 + 1/\eta\right) \| x_{k+1} - x_k \| + \sigma_{\max}\delta_k, \notag \\
\vspace{0.1in}
\|d_{2,k+1}\| 
& \leq (1+ \alpha \beta)\|z_{k+1}\|. \notag 
\end{align}
Therefore, we have 
\begin{equation}
\| {\bf d}_{k+1}\|
\leq c_2 (\|x_{k+1} - x_k \|  + \|z_{k+1} \|) + \sigma_{\max}\delta_k \quad \forall k \geq 0, \notag
\end{equation}
where $c_2 = \max\left\lbrace L_f + \rho \sigma_{\max}^2 + {1}/{\eta}, 1+\alpha\beta \right\rbrace.$
This inequality, combined with ${\bf d}_{k+1} \in \partial \mathcal{L}_{\beta}(\ww_{k+1})$, yields the desired result.
\end{proof}

\subsection{Main Results}

Based on the preceding key properties, we establish our main convergence results: (i) any limit point of the sequence generated by Algorithm \ref{algorithm1} is a stationary solution (or a KKT point) of problem \eqref{eq:op} (Theorem \ref{sub_converge}); (ii) Algorithm \ref{algorithm1} can obtain an $\epsilon$-KKT point with a complexity of $\mathcal{O}(1/\epsilon^{2})$ (Theorem \ref{complexity}); and (iii) under the  assumption of {\em Kurdyka-{\L}ojasiewicz (K{\L})} property, the \emph{whole} sequence generated by Algorithm \ref{algorithm1} is convergent  (Theorem \ref{thm_global_convergence}).

\begin{theorem} [Subsequence convergence]\label{sub_converge}
Suppose that Assumptions \ref{assumption_kkt}$-$\ref{assumption_bounded_domain} hold, and let the sequence $\left\lbrace \ww_k := (x_{k},z_{k},\lambda_{k},\mu_{k}) \right\rbrace$ generated by Algorithm \ref{algorithm1} with $ 0 < \eta < \frac{1}{L_f +  \left(2 + \frac{1}{1+ \alpha \beta}\right) \rho \sigma_{\max}^2}.$
Then, any limit point $\left\lbrace \overline{\ww} :=(\overline{x},\overline{z},\overline{\lambda},\overline{\mu}) \right\rbrace$ of the sequence $\left\lbrace \ww_k \right\rbrace$  is a KKT point of problem \eqref{eq:op}.
\end{theorem}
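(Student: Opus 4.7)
\emph{Proof plan.}
The plan is to verify separately the two KKT conditions of Definition \ref{def_kkt} at any limit point $\overline{\ww}=(\overline{x},\overline{z},\overline{\lambda},\overline{\mu})$: primal feasibility $A\overline{x}-b=0$, which will follow directly from the identity $\lambda_{k+1}-\mu_{k+1}=\rho(Ax_{k+1}-b)$ combined with the asymptotic regularity already established; and stationarity $0\in\nabla f(\overline{x})+\partial h(\overline{x})+A^\top\overline{\lambda}$, which will follow by passing the optimality condition of the proximal $x$-step \eqref{eq:x_update} to the limit.

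I would begin by extracting a convergent subsequence $\{\ww_{k_j}\}\to \overline{\ww}$: its existence is guaranteed because $\{x_k\}\subset \mathrm{dom}(h)$ is bounded by Assumption \ref{assumption_bounded_domain}, $\{\mu_k\}$ and $\{\lambda_k\}$ are bounded by Lemma \ref{lem_mu_bound}, and $z_k=(\lambda_k-\mu_k)/\alpha$ is then bounded as well. Combining Theorem \ref{lem_lagrangian_properties}\ref{lem_aymptotic_regular} with the estimate $\|\mu_{k+1}-\mu_k\|\le\delta_k/2$ from \eqref{eq:lem_iter_rel_1-1} gives $\|\ww_{k+1}-\ww_k\|\to 0$, so the shifted subsequence $\{\ww_{k_j+1}\}$ also converges to $\overline{\ww}$, and $\|\lambda_{k+1}-\mu_{k+1}\|\to 0$ forces $\overline{\lambda}=\overline{\mu}$. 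Dividing \eqref{eq:lambda_update} by $\rho>0$ and letting $j\to\infty$ then yields primal feasibility $A\overline{x}-b=0$.

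For the stationarity condition, the optimality of the proximal $x$-step \eqref{eq:x_update} supplies, for each $k$, a subgradient
\[
u_{k+1}=-\nabla f(x_k)-A^\top\lambda_k-\tfrac{1}{\eta}(x_{k+1}-x_k)\in\partial h(x_{k+1}).
\]
Along $\{k_j\}$, the right-hand side converges to $-\nabla f(\overline{x})-A^\top\overline{\lambda}$ by continuity of $\nabla f$, the convergence $\lambda_{k_j}\to\overline{\lambda}$, and $\|x_{k+1}-x_k\|\to 0$. Since $h$ is proper closed convex, $\mathrm{graph}(\partial h)$ is closed; combining this with $x_{k_j+1}\to\overline{x}$ yields $-\nabla f(\overline{x})-A^\top\overline{\lambda}\in\partial h(\overline{x})$, which is precisely stationarity. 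Together with $A\overline{x}-b=0$, this establishes \eqref{eq:def_kkt} at $(\overline{x},\overline{\lambda})$.

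The main (relatively mild) obstacle is the limit passage through the set-valued subdifferential $\partial h$, which is handled by the closedness of $\mathrm{graph}(\partial h)$ for proper closed convex $h$; every other limit operation involved is continuous. As an alternative route one could invoke Lemma \ref{lem_upper_bound_gradient} to conclude $\mathrm{dist}(0,\partial\mathcal{L}_\beta(\ww_{k+1}))\to 0$ and then extract the $x$-component of the corresponding subgradient, but working directly from \eqref{eq:x_update} makes both the subgradient and its limit the most explicit.
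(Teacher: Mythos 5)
Your proposal is correct, and the overall skeleton (bounded iterates, extraction of a convergent subsequence, convergence of the shifted subsequence via Theorem \ref{lem_lagrangian_properties}\ref{lem_aymptotic_regular}, and feasibility from $\lambda_{k+1}-\mu_{k+1}=\rho(Ax_{k+1}-b)$ together with $\|\lambda_{k+1}-\mu_{k+1}\|\to 0$) matches the paper. Where you genuinely diverge is in the stationarity step. The paper routes the argument through the full subdifferential $\partial\mathcal{L}_{\beta}$: it invokes Lemma \ref{lem_upper_bound_gradient} to get ${\bf d}_{k+1}\in\partial\mathcal{L}_{\beta}(\ww_{k+1})$ with $\|{\bf d}_{k+1}\|\to 0$, and then appeals to closedness of the map $\partial\mathcal{L}_{\beta}$. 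Because $\mathcal{L}_{\beta}$ contains the nonsmooth term $h$, that closure property (for the general/limiting subdifferential) requires convergence of the function values along the subsequence, which is why the paper first proves $h(x_{k_j})\to h(\overline{x})$ via the argmin inequality of the $x$-step combined with lower semicontinuity of $h$, and hence $\mathcal{L}_{\beta}(\ww_{k_j})\to\mathcal{L}_{\beta}(\overline{\ww})$. You instead pass to the limit directly in the optimality condition of \eqref{eq:x_update}, producing an explicit $u_{k+1}\in\partial h(x_{k+1})$ and using that the graph of the subdifferential of a proper closed \emph{convex} function is closed under joint convergence of points and subgradients (no value convergence needed: $h(y)\ge h(x_{k})+\langle u_k,y-x_k\rangle$ passes to the limit using only lower semicontinuity of $h$). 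This buys you a shorter and more elementary argument that sidesteps the $h(x_{k_j})\to h(\overline{x})$ step entirely; it exploits convexity of $h$, which the paper's assumptions provide. The paper's route is more robust in that it would survive a weakening of $h$ to a general lower semicontinuous (nonconvex) regularizer, and it establishes $(0,0,0,0)\in\partial\mathcal{L}_{\beta}(\overline{\ww})$, a fact it reuses later in the K{\L} global-convergence argument. Both proofs are valid under the stated hypotheses.
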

\begin{proof} 
Since the sequence $\left\lbrace \ww_k \right\rbrace$ is bounded, there exists a subsequence $\{ \ww_{k_j} \}$ converging to $\overline{\ww}$ as $j \rightarrow + \infty$. From Theorem \ref{lem_lagrangian_properties}\ref{lem_aymptotic_regular}, we also have that $\{ \ww_{k_j+1} \} \rightarrow \overline{\ww}$ as $j \rightarrow  \infty$. That is,
\begin{equation} \label{eq:thm_subsequence_1}
\underset{j \rightarrow + \infty}{\mathrm{lim}} \left(x_{k_j+1},z_{k_j+1},\lambda_{k_j+1},\mu_{k_j+1}\right) 
= \underset{j \rightarrow + \infty}{\mathrm{lim}} \left(x_{k_j},z_{k_j},\lambda_{k_j},\mu_{k_j}\right)  = \left(\overline{x}, \overline{z}, \overline{\lambda}, \overline{\mu} \right).
\end{equation}
We need to show that 
\begin{equation}
\underset{j \rightarrow + \infty}{\mathrm{lim}}\: \mathcal{L}_{\beta}(\ww_{k_j})
=\mathcal{L}_{\beta}(\overline{\ww}) \quad \mathrm{and} \quad (0,0,0,0) \in \partial \mathcal{L}_{\beta} \left( \overline{\ww} \right). \notag
\end{equation} 

First, using the definition  $x_{k+1}=\mathrm{argmin}_{x \in \mathbb{R}^n}  \widehat{\mathcal{L}}_{\beta}(x,z_k,\lambda_k,\mu_k;x_k)$ and taking $k=k_j$, we have
\begin{equation}
\left\langle \nabla_x \ell_{\beta}(\ww_{k_j}), x_{k_j+1} - x_{k_j} \right\rangle + \frac{1}{2\eta} \| x_{k_j+1} - x_{k_j} \|^2 + h(x_{k_j+1})  
\leq \left\langle \nabla_x \ell_{\beta}(\ww_{k_j}), \overline{x} - x_{k_j} \right\rangle + \frac{1}{2\eta} \| \overline{x} - x_{k_j} \|^2 + h(\overline{x}). \notag
\end{equation} 
Letting $j \rightarrow + \infty$ and using \eqref{eq:thm_subsequence_1}, we obtain
$${\lim \sup}_{j \rightarrow + \infty} \ h(x_{k_j}) \leq \ h(\overline{x}).$$ 
On the other hand, by the closedness of $h$, we have that ${\lim \inf}_{j \rightarrow + \infty} h(x_{k_j}) \geq \ h(\overline{x})$. Thus,
\begin{equation} 
\underset{j \rightarrow + \infty}{\lim} \ h(x_{k_j}) = h(\overline{x}), \notag 
\end{equation}
which, along with the continuity of $f$, yields
\begin{equation}
	\underset{j \rightarrow + \infty}{\mathrm{lim}}\: \mathcal{L}_{\beta}(\ww_{k_j})
	=\mathcal{L}_{\beta}(\overline{\ww}). \notag
\end{equation} 

Next, for ${\bf d}_{k+1} \in \partial \mathcal{L}_{\beta}(\ww_{k+1})$ (see Lemma \ref{lem_upper_bound_gradient}), by  Theorem \ref{lem_lagrangian_properties}\ref{lem_aymptotic_regular} that $\| x_{k+1} -x_{k} \| \rightarrow 0, \| z_{k+1} \| \rightarrow 0$, and $\delta_k \rightarrow 0$ as $k \rightarrow + \infty$, we have 
\[
\| {\bf d}_{k+1} \|
\leq c_2 (\| x_{k+1} -x_{k} \| + \| z_{k+1}\|) +\sigma_{\max}\delta_k \rightarrow 0 \ \  \mathrm{as} \ \ k \rightarrow + \infty.
\]
Hence ${\bf d}_{k+1} \rightarrow 0$ as as $k \rightarrow + \infty$. Using the closedness of the map $\partial \mathcal{L}_{\beta}$, we obtain
\begin{equation}
(0,0,0,0) \in \partial \mathcal{L}_{\beta}( \overline{\ww}), \notag
\end{equation}
which, together with the fact $\nabla_\mu \mathcal{L}_{\beta} (\overline{\ww}) 
= \frac{1}{\rho}(\overline{\lambda} - \overline{\mu})
= A \overline{x} - b$, implies that $\overline{\ww}$ satisfies the KKT conditions in \eqref{eq:def_kkt} (Assumption \ref{assumption_kkt}):
\begin{align}
0 \in \nabla f(\overline{x}) + \partial h(\overline{x}) +A^{\top} \overline{\lambda}, 
\quad A \overline{x} -b =0. \notag
\end{align}
Therefore, the limit point $\overline{\ww}$ of  the sequence $\left\lbrace \ww_k \right\rbrace$ is a KKT solution of problem \eqref{eq:op}. 
\end{proof}

We are now ready to establish the iteration complexity for Algorithm \ref{algorithm1}. In particular, for a given tolerance $\epsilon > 0$, we provide a bound on $T(\epsilon)$, the iteration index required to achieve an $\epsilon$-KKT solution of problem \eqref{eq:op}. This is defined as follows (\citet{hong2016convergence,zeng2022moreau}):
\begin{equation} \label{eq:complexity_index}
T(\epsilon):= \min \left\lbrace k : \| {\bf d}_{k+1} \| \leq \epsilon, \ k\geq 0 \right\rbrace. \notag
\end{equation}

\begin{theorem}[Iteration complexity] \label{complexity} Suppose that Assumptions \ref{assumption_kkt}$-$\ref{assumption_bounded_domain} hold, and let $\left\lbrace \ww_k \right\rbrace$ be the sequence generated by Algorithm \ref{algorithm1} with $0 < \eta <\frac{1}{L_f + \left(2 + \frac{1}{1+ \alpha \beta}\right) {\rho \sigma_{\max}^2}}$. Then, the number of iterations required by Algorithm \ref{algorithm1} to achieve an 
$\epsilon$-KKT solution of problem \eqref{eq:op} is bounded by
\begin{equation} \label{eq:iter_complexity}
T(\epsilon) \leq \mathcal{O} 
\left( 
\frac{ \frac{3 c_2}{c_1} 
\left(
\mathcal{L}_{\rho}(\ww_1) - \underline{\mathcal{L}_{\beta}} + B_{\widehat{\delta}} \right) + B_{\delta}}{\epsilon^2}  \right)= \mathcal{O}(1/\epsilon^2),  \notag
\end{equation}
where 
$c_1 := \min\left\lbrace \frac{1}{2}\left( \frac{1}{\eta} -  L_f - \left(2 + \frac{1}{1+ \alpha \beta}\right) {\rho \sigma_{\max}^2} \right), \frac{1}{2\alpha} \right\rbrace$ and  
$c_2 = \max\left\lbrace L_f + \rho \sigma_{\max}^2 + \frac{1}{\eta}, 1+\alpha\beta \right\rbrace$, as defined in Theorem \ref{lem_lagrangian_properties}\ref{lem_aymptotic_regular} and Lemma \ref{lem_upper_bound_gradient}, respectively. 
In addition, $B_{\widehat{\delta}} := \sum_{k=1}^{+\infty} \widehat{\delta}_k$ with $\widehat{\delta}_k =  \frac{\delta_k}{\rho} + \frac{\delta_k^2}{8{\rho}}$ and $B_{\delta} := 3 \sigma_{\max} \sum_{k=1}^{+\infty} \delta_k.$ 
\end{theorem}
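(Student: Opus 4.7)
The plan is to combine the one-step descent inequality from Theorem~\ref{lem_lagrangian_properties}\ref{lem_one_iter_PPL} with the subgradient error bound from Lemma~\ref{lem_upper_bound_gradient}, and then apply a standard telescoping/averaging argument to extract the $\mathcal{O}(1/\epsilon^2)$ complexity. The structure is very close to the classical ``sum of squared successive differences'' argument used for inexact/proximal descent methods, with the only new feature being that the descent is only \emph{approximate} up to the vanishing, summable perturbation $\widehat{\delta}_k$.

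First, I would telescope the descent inequality of Theorem~\ref{lem_lagrangian_properties}\ref{lem_one_iter_PPL} from $k=1$ to $k=T$, producing $c_1 \sum_{k=1}^{T}\bigl(\|x_{k+1}-x_k\|^2 + \|z_{k+1}\|^2\bigr) \leq \mathcal{L}_\beta(\ww_1) - \mathcal{L}_\beta(\ww_{T+1}) + \sum_{k=1}^{T}\widehat{\delta}_k$, where $c_1$ is exactly the constant already introduced in Theorem~\ref{lem_lagrangian_properties}\ref{lem_aymptotic_regular}. By the lower bound from Theorem~\ref{lem_lagrangian_properties}\ref{lem_lagrangian_convergence}, $\mathcal{L}_\beta(\ww_{T+1}) \geq \underline{\mathcal{L}_\beta}$, and since $\delta_k = r^k \delta_0$ with $r\in(0.9,1)$ the series $\sum_k \delta_k$ and $\sum_k \delta_k^2$ are both geometric and hence summable; in particular $\sum_k \widehat{\delta}_k \leq B_{\widehat{\delta}} < \infty$. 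Thus the right-hand side is bounded uniformly in $T$ by $\mathcal{L}_\beta(\ww_1) - \underline{\mathcal{L}_\beta} + B_{\widehat{\delta}}$.

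Next, I would square the error bound of Lemma~\ref{lem_upper_bound_gradient} using $(a+b+c)^2 \leq 3(a^2+b^2+c^2)$ to get $\|\mathbf{d}_{k+1}\|^2 \leq 3c_2^2\|x_{k+1}-x_k\|^2 + 3c_2^2\|z_{k+1}\|^2 + 3\sigma_{\max}^2 \delta_k^2$, and sum from $k=1$ to $T$. Plugging in the previous telescoped bound yields $\sum_{k=1}^{T}\|\mathbf{d}_{k+1}\|^2 \leq \tfrac{3c_2^2}{c_1}\bigl(\mathcal{L}_\beta(\ww_1) - \underline{\mathcal{L}_\beta} + B_{\widehat{\delta}}\bigr) + 3\sigma_{\max}^2 \sum_{k=1}^{\infty}\delta_k^2$. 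Since $\delta_k \in (0,1]$ we have $\delta_k^2 \leq \delta_k$, so the last term is controlled by a constant of the form $B_\delta = 3\sigma_{\max}\sum_{k=1}^\infty \delta_k$, matching the statement. Invoking the pigeonhole inequality $T\cdot \min_{1 \leq k \leq T}\|\mathbf{d}_{k+1}\|^2 \leq \sum_{k=1}^{T}\|\mathbf{d}_{k+1}\|^2$ then shows that $T(\epsilon) \leq \mathcal{O}(1/\epsilon^2)$ with the displayed constants. Finally, I would confirm that $\|\mathbf{d}_{k+1}\|\leq \epsilon$ indeed yields an $\epsilon$-KKT point in the sense of Definition~\ref{def_epsilon-kkt}: the fourth block of $\mathbf{d}_{k+1}$ is $(1+\alpha\beta)z_{k+1}$, and by the $\lambda$- and $z$-update steps $Ax_{k+1}-b = (1+\alpha\beta)z_{k+1}$, so feasibility $\|Ax_{k+1}-b\|\leq \epsilon$ follows, while $d_{1,k+1}$ already lies in $\nabla f(x_{k+1}) + \partial h(x_{k+1}) + A^\top\lambda_{k+1}$ up to a residual of size $\|d_{1,k+1}\|\leq \epsilon$.

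The main obstacle is really just the careful bookkeeping of constants to produce the precise form in the statement; the substantive ingredients (approximately monotone descent with summable error, and a vanishing stationarity residual expressed purely in primal successive differences) are all already in hand from Theorem~\ref{lem_lagrangian_properties} and Lemma~\ref{lem_upper_bound_gradient}, so no new estimates are needed beyond exploiting the geometric summability of $\{\delta_k\}$.
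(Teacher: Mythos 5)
Your proposal is correct and follows essentially the same route as the paper: telescope the approximate descent inequality of Theorem \ref{lem_lagrangian_properties}\ref{lem_one_iter_PPL}--\ref{lem_aymptotic_regular}, square the error bound of Lemma \ref{lem_upper_bound_gradient} via $(a+b+c)^2\leq 3(a^2+b^2+c^2)$, sum, and invoke the definition of $T(\epsilon)$ together with the geometric summability of $\{\delta_k\}$. Your closing check that $\|\mathbf{d}_{k+1}\|\leq\epsilon$ indeed certifies an $\epsilon$-KKT point (using $Ax_{k+1}-b=(1+\alpha\beta)z_{k+1}$) is a small but welcome addition that the paper leaves implicit.
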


\begin{proof}
By using Lemma \ref{lem_upper_bound_gradient} and the fact  $(a+b+c)^2 \leq 3(a^2 + b^2 + c^2)$, we have
\begin{align} 
\| {\bf d}_{k+1} \|^2
& \leq 3c_2^2 \left(\|x_{k+1} - x_k \|^2  + \|z_{k+1} \|^2 \right)+ 3\sigma_{\max}^2 \delta_k^2.
\label{eq:thm_complexity_e1}
\end{align}
Moreover, from Theorem \ref{lem_lagrangian_properties}\ref{lem_aymptotic_regular}, we have
\begin{equation} \label{eq:thm_complexity_e2}
  \| x_{k+1} - x_k \|^2 + \| z_{k+1} \|^2 \leq \frac{1}{c_1}\left(\mathcal{L}_{\beta}(\ww_k) - \mathcal{L}_{\beta}(\ww_{k+1}) + \widehat{\delta}_k \right).
\end{equation}
Combining \eqref{eq:thm_complexity_e1} and \eqref{eq:thm_complexity_e2} yields
 \begin{align}
 \| {\bf d}_{k+1} \|^2 
 \leq \frac{3c_2^2}{c_1} \left(\mathcal{L}_{\beta}(\ww_k) - \mathcal{L}_{\beta}(\ww_{k+1}) + \widehat{\delta}_k \right) + 3 \sigma_{\max}^2  \delta_k^2. \notag
\end{align}
Summing up the above inequalities from $k=1,\ldots, T(\epsilon)$, we obtain
\begin{align}
 \sum_{k=1}^{T(\epsilon)} \| {\bf d}_{k+1} \|^2 
 & \leq
\frac{3c_2^2}{c_1} \left(\mathcal{L}_{\beta}(\ww_1) - \mathcal{L}_{\beta}(\ww_{T(\epsilon) +1}) +  \sum_{k=1}^{T(\epsilon)}  \widehat{\delta}_k  \right) + 3 \sigma_{\max}^2   \sum_{k=1}^{T(\epsilon)}\delta_k^2 \notag \\
& \leq 
\frac{3c_2^2}{{c_1}} \left(\mathcal{L}_{\beta}(\ww_1) - \underline{\mathcal{L}_{\beta}} +   B_{\widehat{\delta}} \notag \right) + B_{\delta}, \notag
\end{align}
where  the second inequality follows from the lower boundedness of $\mathcal{L}_{\beta}(\ww_k)$ by $\underline{\mathcal{L}_{\beta}}$ (Theorem  \ref{lem_lagrangian_properties}\ref{lem_lagrangian_convergence}), and the facts that $ \sum_{k=1}^{+ \infty} \delta_k  \leq \frac{\delta_1}{2(1-r)} < + \infty $ and $ \sum_{k=1}^{+ \infty} \delta_k^2  \leq \frac{\delta_1^2}{2(1-r^2)} < + \infty$. Then, in view of the definitions of $T(\epsilon)$ and $\|{\bf d}_{k+1} \|$, we obtain
\begin{equation} 
\begin{aligned}
	T(\epsilon) \cdot \epsilon^2
    & \leq \frac{3 c_2^2}{{c_1}}  \left(\mathcal{L}_{\beta}(\ww_1) - \underline{\mathcal{L}_{\beta}} + B_{\widehat{\delta}} \right) + B_{\delta},  \notag 
    \end{aligned}
\end{equation}
equivalently,
\begin{equation} 
\begin{aligned}
T(\epsilon) 
& \leq \frac{\frac{3 c_2^2}{{c_1}}  \left(\mathcal{L}_{\beta}(\ww_1) - \underline{\mathcal{L}_{\beta}} + B_{\widehat{\delta}} \right) + B_{\delta}}
{\epsilon^2 },  \notag 
    \end{aligned}
\end{equation}
which proves that the iteration complexity of Algorithm \ref{algorithm1} is $\mathcal{O}(1/\epsilon^2)$.
\end{proof}

Finally, we enhance the subsequence convergence result by proving that the whole sequence $\left\lbrace \ww_{k} \right\rbrace$ converges to a KKT solution of problem \eqref{eq:op}, under the additional assumption that $f$ satisfies the \emph{Kurdyka-{\L}ojasiewicz} (K{\L}) property (see \citet{bolte2007lojasiewicz,kurdyka1998gradients} and \citet{lojasiewicz1963propriete}). 
In particular, by leveraging the properties of Algorithm \ref{algorithm1} that satisfy the conditions defined in \citet[Definition 2]{gur2023convergent}, we extend the definition of {\em ``approximate gradient-like descent sequence''} in \citet{gur2023convergent} to establish global convergence in our constrained nonconvex setting with suitable modifications.

Before proving the global convergence, let us briefly review the K{\L} inequality.

\begin{definition}[K{\L} Property \& K{\L} function] \label{def_KL_pro_func}
Let $\zeta \in \left(0, + \infty \right]$. Denote by $\Phi_\zeta$ the class of all concave and continuous functions $\varphi:\left[0, \zeta \right) \rightarrow \mathbb{R}_{+}$ that satisfy the following condition:
\begin{enumerate}[label=(\roman*)]
\item $\varphi(0)=0$;
		
\item $\varphi$ is continuously differentiable ($C^{1}$) on $\left[0, \zeta \right)$ and continuous at 0;
		
\item for all $s \in (0, \zeta): \varphi^{\prime}(s)>0.$
\end{enumerate}
	
A proper and lower semicontinuous function $\Psi: \mathbb{R}^n \rightarrow \mathbb{R} \cup \{+ \infty \}$ is said to have the {Kurdyka-{\L}ojasiewicz (K{\L}) property} at $\overline{u} \in \mathrm{dom} \: \partial \Psi :=\left\lbrace u \in \mathbb{R}^n: \partial \Psi(u)=\emptyset \right\rbrace $ if there exist $\zeta \in \left(0, +\infty\right]$, a neighborhood $\mathcal{U}$ of $\overline{u}$ and a function $\varphi \in \Phi_\zeta$ such that for every
\[u \in U(\overline{u}) \cap \{u: \Psi(\overline{u}) < \Psi({u}) <\Psi(\overline{u}) +\zeta \},\]
it holds that
\[
\varphi^{\prime}(\Psi(u) -\Psi(\overline{u})) \cdot \mathrm{dist}(0,\partial \Psi(u)) \geq 1.
\]
The function $\Psi$ satisfying the K{\L} property at each point of dom$\: \partial \Psi$ is called a \emph{K{\L} function}. 
\end{definition}

The functions $\varphi$ belonging to the class $\Phi_\zeta$ for $\zeta \in \left(0, +\infty\right]$ are called {\em desingularization functions}. It is well known that \textit{semi-algebraic} and \textit{real-analytic} functions, which encompass a wide range of applications, belong to the class of functions satisfying the K{\L} property. For a comprehensive study of K{\L} functions and illustrative examples, we refer to \citet{attouch2009convergence,bolte2007clarke,li2018calculus,xu2013block}.

\begin{lemma}[Uniformized K{\L} Property ({\cite[Lemma 6]{bolte2014proximal}})] \label{lem_uniformized_KL_property}
Let $\Omega$ be a compact set and let $\Psi: \mathbb{R}^n \rightarrow (-\infty, \infty]$ be proper, lower semicontinuous function. Assume that $\Psi$ is constant on $\Omega$ and satisfies the K{\L} property at each point of $\Omega$.  Then there exist $\varepsilon>0$, $\zeta>0$, and desingularizing function $\varphi \in \Phi_\zeta$  such that for all $\overline{u}$ in $\Omega$ and all $u$ in the following intersection:
\begin{equation} \label{eq:lem_uniform_KL_1}
\left\{ u \in \mathbb{R}^n: \mathrm{dist}(u, \Omega) < \varepsilon \right\} \cap \left[ \Psi(\overline{u}) < \Psi(u) < \Psi(\overline{u}) + \zeta\right],
\end{equation}
and one has
\begin{equation} \label{eq:lem_uniform_KL_2}
\varphi^{\prime}(\Psi(u) -\Psi(\overline{u})) \cdot \mathrm{dist}(0,\partial \Psi(u)) \geq 1.
\end{equation}
\end{lemma}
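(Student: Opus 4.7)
\emph{Proof proposal.} This is the classical uniformization of the KŁ property due to \cite{bolte2014proximal}; the natural strategy is a compactness plus finite-cover argument that lifts pointwise KŁ data to data uniform over $\Omega$.

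First, I would invoke Definition \ref{def_KL_pro_func} at every point $\overline{u} \in \Omega$ (these lie in $\mathrm{dom}\,\partial\Psi$ by hypothesis), obtaining $\zeta_{\overline{u}} > 0$, an open neighborhood $\mathcal{U}_{\overline{u}}$ of $\overline{u}$, and $\varphi_{\overline{u}} \in \Phi_{\zeta_{\overline{u}}}$ on which the local KŁ inequality holds. Since $\Omega$ is compact, the open cover $\{\mathcal{U}_{\overline{u}}\}_{\overline{u}\in\Omega}$ admits a finite subcover $\mathcal{U}_{\overline{u}_1},\ldots,\mathcal{U}_{\overline{u}_p}$ with associated $\zeta_i$ and $\varphi_i$. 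A Lebesgue-number argument then produces $\varepsilon > 0$ such that the $\varepsilon$-enlargement $\{u:\mathrm{dist}(u,\Omega)<\varepsilon\}$ is contained in $\bigcup_{i=1}^p \mathcal{U}_{\overline{u}_i}$.

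The crucial ingredient is that $\Psi$ is constant on $\Omega$, say $\Psi\equiv \Psi^\ast$, so that all local reference levels $\Psi(\overline{u}_i)$ collapse to the same value. I would set $\zeta := \min_i \zeta_i$ and build a common desingularizing function by $\varphi(s) := \sum_{i=1}^p \varphi_i(s)$. Concavity, continuity on $[0,\zeta)$, $C^1$ regularity on $(0,\zeta)$, $\varphi(0)=0$, and strict positivity of $\varphi'$ on $(0,\zeta)$ are inherited coordinate-wise from each $\varphi_i \in \Phi_{\zeta_i}$, so $\varphi \in \Phi_\zeta$; crucially, one has the pointwise lower bound $\varphi'(s)\geq \varphi_i'(s)$ for every $i$.

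To close the argument, pick any $\overline{u}\in\Omega$ and any $u$ in the intersection \eqref{eq:lem_uniform_KL_1}. The choice of $\varepsilon$ forces $u\in\mathcal{U}_{\overline{u}_i}$ for some $i$, and the identity $\Psi(\overline{u})=\Psi(\overline{u}_i)=\Psi^\ast$ aligns the ``base level'' so that the local KŁ inequality for $\varphi_i$ at $u$ is directly applicable; the bound $\varphi'\geq \varphi_i'$ then upgrades it to \eqref{eq:lem_uniform_KL_2}. The main subtlety I anticipate is twofold: checking that the aggregated $\varphi$ genuinely lies in $\Phi_\zeta$ on the common interval $[0,\zeta)$ (in particular, that the sum of concave $C^1$ desingularizers stays admissible), and verifying that the Lebesgue-number step yields an $\varepsilon$ independent of the reference point $\overline{u}\in\Omega$ — this last independence is precisely what the constancy of $\Psi$ on $\Omega$ buys us.
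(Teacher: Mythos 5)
The paper does not prove this lemma; it is imported verbatim by citation from Bolte--Sabach--Teboulle (Lemma 6 of the cited reference), whose proof is exactly the argument you reconstruct: finite subcover of the compact set $\Omega$, a common $\zeta:=\min_i\zeta_i$, the aggregated desingularizer $\varphi=\sum_i\varphi_i$ with $\varphi'\geq\varphi_i'$, and the constancy of $\Psi$ on $\Omega$ to align the reference levels. Your proposal is correct and coincides with the standard proof of the cited result, so there is nothing to add.
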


With the uniformized K{\L} property, we prove that the generated sequence has finite length, and thus the {\em whole sequence} converges to a KKT point of problem \eqref{eq:op}. 

\begin{theorem}[Global convergence] \label{thm_global_convergence}
Given the premises of Theorem \ref{sub_converge} and assuming  that $f$ satisfies the K{\L} property,  consider the sequence $\left\lbrace \ww_k \right\rbrace$ generated by Algorithm \ref{algorithm1} under Assumptions \ref{assumption_kkt}$-$\ref{assumption_bounded_domain}. Then the whole sequence $\left\lbrace \ww_k \right\rbrace$ converges a point $\overline{{\bf w}}$ that is a KKT solution of problem \eqref{eq:op}. 
\end{theorem}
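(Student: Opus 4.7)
The plan is to follow the K{\L}-based global convergence framework, adapted to the ``approximately nonincreasing'' setting of \citet{gur2023convergent}. The three key ingredients are essentially in place from the preceding results: (i) the approximate descent estimate in Theorem~\ref{lem_lagrangian_properties}\ref{lem_one_iter_PPL}, which controls $\|x_{k+1}-x_k\|^2+\|z_{k+1}\|^2$ via $\mathcal{L}_{\beta}(\ww_k)-\mathcal{L}_{\beta}(\ww_{k+1})+\widehat{\delta}_k$; (ii) the subgradient error bound in Lemma~\ref{lem_upper_bound_gradient}; and (iii) the convergence of $\mathcal{L}_{\beta}(\ww_k)$ to $\underline{\mathcal{L}_{\beta}}$ and the identification of limit points as KKT points (Theorem~\ref{sub_converge}). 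What remains is to promote these squared descent estimates into a finite-length bound via the K{\L} inequality.

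First I would verify that the set $\Omega$ of limit points of $\{\ww_k\}$ is nonempty and compact (by Assumption~\ref{assumption_bounded_domain} and Lemma~\ref{lem_mu_bound}), that $\mathcal{L}_{\beta}$ is constant on $\Omega$ with value $\underline{\mathcal{L}_{\beta}}$ (combining Theorem~\ref{lem_lagrangian_properties}\ref{lem_lagrangian_convergence} with the asymptotic regularity in Theorem~\ref{lem_lagrangian_properties}\ref{lem_aymptotic_regular} and the lower semicontinuity of $h$), and that $\mathcal{L}_{\beta}$ inherits the K{\L} property from $f$ (since $h$ is proper lsc convex and the remaining terms are polynomials, hence semi-algebraic and K{\L}, and the K{\L} class is stable under such additions). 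Applying Lemma~\ref{lem_uniformized_KL_property} then furnishes a desingularizing $\varphi\in\Phi_\zeta$ such that for all sufficiently large $k$,
\[
\varphi'\bigl(\mathcal{L}_{\beta}(\ww_k)-\underline{\mathcal{L}_{\beta}}\bigr)\cdot\mathrm{dist}\bigl(0,\partial\mathcal{L}_{\beta}(\ww_k)\bigr)\geq 1.
\]

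Set $E_k:=\mathcal{L}_{\beta}(\ww_k)-\underline{\mathcal{L}_{\beta}}\geq 0$. Concavity of $\varphi$ gives $\varphi(E_k)-\varphi(E_{k+1})\geq \varphi'(E_k)(E_k-E_{k+1})$. Combining this with Theorem~\ref{lem_lagrangian_properties}\ref{lem_one_iter_PPL} (which bounds $E_k-E_{k+1}$ below in terms of $\|x_{k+1}-x_k\|^2+\|z_{k+1}\|^2$ minus $\widehat{\delta}_k$) and with the K{\L} inequality together with Lemma~\ref{lem_upper_bound_gradient} (which bounds $1/\varphi'(E_k)$ above by $c_2(\|x_k-x_{k-1}\|+\|z_k\|)+\sigma_{\max}\delta_{k-1}$), and then applying the standard $2\sqrt{ab}\leq a+b$ trick, I expect an estimate of the form
\[
\|x_{k+1}-x_k\|+\|z_{k+1}\|\leq C_1\bigl[\varphi(E_k)-\varphi(E_{k+1})\bigr]+\tfrac{1}{2}\bigl(\|x_k-x_{k-1}\|+\|z_k\|\bigr)+C_2\sqrt{\widehat{\delta}_k}+C_3\delta_{k-1}.
\]
Summing over $k$ and absorbing the contraction term by finite-length induction, and using that $\delta_k=r^k\delta_0$ decays geometrically (so that $\sqrt{\widehat{\delta}_k}$ also remains summable), I obtain $\sum_k(\|x_{k+1}-x_k\|+\|z_{k+1}\|)<+\infty$. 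Lemma~\ref{lem_iterates_relations} then upgrades this to $\sum_k(\|\mu_{k+1}-\mu_k\|+\|\lambda_{k+1}-\lambda_k\|+\|z_{k+1}-z_k\|)<+\infty$ using $\|\mu_{k+1}-\mu_k\|\leq \delta_k/2$ and $z_{k+1}=(\lambda_{k+1}-\mu_{k+1})/\alpha$. Consequently $\{\ww_k\}$ is Cauchy, its unique limit $\overline{\ww}$ lies in $\Omega$, and Theorem~\ref{sub_converge} identifies $\overline{\ww}$ as a KKT solution.

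The main obstacle is the perturbation term $\widehat{\delta}_k$ inside the sufficient-decrease inequality: in the classical Attouch--Bolte--Svaiter framework one has clean monotone decrease, whereas here a non-summable perturbation would destroy finite length. The geometric decay $\delta_k=r^k\delta_0$ with $r\in(0.9,1)$ is precisely what rescues the argument, since the square-root step produced by $2\sqrt{ab}\leq a+b$ still yields a summable tail $\sum_k\sqrt{\widehat{\delta}_k}<+\infty$. Calibrating the constants so that the contraction coefficient on $\|x_k-x_{k-1}\|+\|z_k\|$ stays strictly below one, while keeping the $\delta_k$-dependent terms summable, is the delicate accounting step; this is exactly the generalization of the ``approximate gradient-like descent sequence'' of \citet{gur2023convergent} to the multi-block primal-dual setting $(x,z,\lambda,\mu)$.
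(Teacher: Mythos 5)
Your proposal is correct and follows essentially the same route as the paper: uniformized K{\L} on the compact limit set, the concavity bound $\varphi(E_k)-\varphi(E_{k+1})\geq\varphi'(E_k)(E_k-E_{k+1})$ combined with the descent estimate and the subgradient error bound, the $2\sqrt{ab}\leq a+b$ step yielding the same contraction recursion with summable $\sqrt{\widehat{\delta}_k}$ and $\delta_{k-1}$ tails, and a telescoping sum giving finite length and hence Cauchy convergence to a KKT point. The only piece the paper handles that you omit is the degenerate case where $\mathcal{L}_{\beta}(\ww_{\bar{k}})=\underline{\mathcal{L}_{\beta}}$ at some finite $\bar{k}$ (needed because the K{\L} inequality requires strict inequality $\mathcal{L}_{\beta}(\ww_k)>\underline{\mathcal{L}_{\beta}}$), but that case is disposed of directly from the descent estimate.
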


\begin{proof}
Let $\overline{\ww}$ be a limit point of the sequence $\left\lbrace \ww_k\right\rbrace$. By Theorems \ref{lem_lagrangian_properties} and \ref{sub_converge}, it holds that
\begin{equation} \label{eq:thm_global_convergence_eq1}
	\underset{k \rightarrow + \infty}{\mathrm{lim}}\mathcal{L}_{\beta}(\mathbf{w}_k)
	= \mathcal{L}_{\beta}(\overline{\mathbf{w}}).
	\end{equation}
In the following, we need to consider two cases.
	
First, let $\mathbb{N} =\{0,1,2,\ldots\}$ be the set of nonnegative integers, and suppose that there is an integer $\bar{k} \in \mathbb{N}$ such that $\mathcal{L}_{\beta} (\mathbf{w}_{\bar{k}})= \mathcal{L}_{\beta} (\overline{\mathbf{w}})$. Since  $\mathcal{L}_{\beta}({\mathbf{w}_k}) - \mathcal{L}_{\beta}(\overline{\mathbf{w}}) = 0$ for all $k \geq \bar{k}$, then we have from \eqref{eq:aymptotic_e1} in Theorem \ref{lem_lagrangian_properties}\ref{lem_aymptotic_regular} that for any $k \geq \bar{k}$
\begin{equation}  \label{eq:thm_global_convergence_eq1-1}
\frac{c_1}{2}(\| x_{k+1} - x_k \| + \| z_{k+1}  \|)^2 \leq c_1(\| x_{k+1} - x_k \|^2 + \| z_{k+1}  \|^2) \leq  \left(\mathcal{L}_{\beta} (\mathbf{w}_{k}) - \mathcal{L}_{\beta}(\ww_{k+1}) + \widehat{\delta}_k \right) \leq \widehat{\delta}_k, 
\end{equation}
where in the first inequality, we used the fact that $\frac{(a + b)^2}{2} \leq a^2 + b^2 $ for all $a,b \in \mathbb{R}$. By summing the above inequalities and using the fact $\sum_{k = \bar{k}}^{+\infty} \sqrt{\widehat{\delta}_k} < + \infty$ for all $k \geq \bar{k}$, we obtain
\begin{equation}
    \sum_{k = \bar{k}}^{+\infty}\| x_{k+1} - x_k \| + \| z_{k+1}  \| \leq \sqrt{\frac{2}{c_1}} \sum_{k = \bar{k}}^{+\infty} \sqrt{\widehat{\delta}_k} < + \infty, \notag
\end{equation}
which implies that $
x_{k+1} - x_k = 0$ and $z_{k+1} = 0$ for all $k\geq \bar{k}$, 
and it follows that  
$z_{k+1}-z_{k}=0$, $\lambda_{k+1} - \mu_{k+1}=0$, and $\lambda_{k+1} - \lambda_{k}=0$ for all $k \geq \bar{k}$. 
Therefore, the sequence $\{ \mathbf{w}_k \}$ has finite length, and it globally converges to a
point $\overline{{\bf w}}$.
	
Now, consider the case where such an integer $\bar{k} \in \mathbb{N}$ does not exist. Suppose that $\mathcal{L}_{\beta}(\mathbf{w}_k)>\mathcal{L}_{\beta}(\overline{\mathbf{w}})$ for all $k \geq 0$. We first need to show that $\mathcal{L}_{\beta}$ is finite and constant on the set of all limit points, denoted by $\omega({\mathbf{w}}^{0})$.  Then we prove that $\left\lbrace  \mathbf{w}_{k} \right\rbrace$ is of a finite length and it thus is convergent. 	

From Theorem \ref{lem_lagrangian_properties}, we know that the sequence $\{\mathcal{L}_{\beta}(\mathbf{w}_k)\}$ is approximately nonincreasing and converges to $\mathcal{L}_{\beta}(\overline{\mathbf{w}})$. Hence for any $\zeta > 0$, there exists an integer $k_0 \in \mathbb{N} $ such that 
\begin{equation}
\mathcal{L}_{\beta} (\overline{\mathbf{w}})<\mathcal{L}_{\beta} (\mathbf{w}_{k}) < \mathcal{L}_{\beta} (\overline{\mathbf{w}}) +\zeta, \quad \forall k > k_0. \notag 
\end{equation}
From Theorem \ref{sub_converge}, we have that ${\mathrm{lim}}_{k \rightarrow + \infty}\mathrm{dist}(\mathbf{w}_{k},\omega({\mathbf{w}}^{0}))=0$, which implies that there exists an integer $k_1 \in \mathbb{N} $ such that for any  $\varepsilon>0$, 
\[\mathrm{dist}(\mathbf{w}_{k},\omega({\mathbf{w}}^{0}) )< \varepsilon, \quad \forall k > k_1.\] 
Thus, for any $k > k_2:= \max\{k_0,k_1\}$, $ \mathbf{w}_k $ belongs to the intersection \eqref{eq:lem_uniform_KL_1} in Lemma \ref{lem_uniformized_KL_property} with	$\Omega=\omega({\mathbf{w}}^{0})$. Moreover, by Theorem \ref{sub_converge}, $\Omega=\omega({\mathbf{w}}^{0})$ is nonempty and compact. We also have that
$\{  \mathcal{L}_{\beta}(\mathbf{w}_k) \}$ converges to a finite limit, $\underline{\mathcal{L}_{\beta}}$. It follows from \eqref{eq:thm_global_convergence_eq1} that $\underline{\mathcal{L}_{\beta}}= \mathcal{L}_{\beta} (\overline{\mathbf{w}})$, which shows that  $\mathcal{L}_{\beta}$ is finite and constant on $\omega({\mathbf{w}}^0)$.
	
Since $\mathcal{L}_{\beta}$ is a K{\L} function, by applying  Lemma \ref{lem_uniformized_KL_property} with $\Omega=\omega({\mathbf{w}}^{0})$, we have that for any $k > k_2 $
\begin{equation}
\varphi^{\prime}\left( \mathcal{L}_{\beta} (\mathbf{w}_{k}) -\mathcal{L}_{\beta} (\overline{\mathbf{w}})\right)\cdot 
\mathrm{dist} \left(0, \partial \Psi(\mathbf{w}_k)) \right) \geq 1, \notag \end{equation}
which, combined with Lemma \ref{lem_upper_bound_gradient}, yields 	
\begin{equation} \label{eq:thm_global_case2_1}
\begin{aligned}
\varphi^{\prime}\left( \mathcal{L}_{\beta} (\mathbf{w}_{k}) -\mathcal{L}_{\beta} (\overline{\mathbf{w}})\right) 
& \geq \frac{1}{\mathrm{dist} \left(0, \partial \mathcal{L}_{\beta} (\mathbf{w}_{k}) \right)}
\geq \frac{1}{ c_2 (\| x_{k} - x_{k-1} \| + \|z_{k} \|) + \sigma_{\max} {\delta}_{k-1} },
\end{aligned}
\end{equation}
Since $\varphi$ is concave and continuous, we have 
\begin{equation}
\varphi\left( \mathcal{L}_{\beta}(\mathbf{w}_{k}) -\mathcal{L}_{\beta} (\overline{\mathbf{w}}\right) 
-\varphi\left( \mathcal{L}_{\beta} (\mathbf{w}_{k+1}) -\mathcal{L}_{\beta} (\overline{\mathbf{w}})\right) 
\geq
\varphi^{\prime}\left(\mathcal{L}_{\beta} (\mathbf{w}_{k}) -\mathcal{L}_{\beta} (\overline{\mathbf{w}})\right)\left(\mathcal{L}_{\beta} (\mathbf{w}_{k}) -\mathcal{L}_{\beta} ({\mathbf{w}_{k+1}})\right). \notag
\end{equation}
For any $p,q \in \mathbb{N}$, define the following quantity for convenience:
\[
\triangle_{p,q}:=\varphi\left(\mathcal{L}_{\beta}(\mathbf{w}_{p}) -\mathcal{L}_{\beta} (\overline{\mathbf{w}})\right)
-\varphi\left(\mathcal{L}_{\beta} (\mathbf{w}_{q}) -\mathcal{L}_{\beta} (\overline{\mathbf{w}})\right).
\]
Then, we have
\begin{align}
\triangle_{k,k+1} 
& \geq 
\varphi^{\prime}\left(\mathcal{L}_{\beta}(\mathbf{w}_{k}) - \mathcal{L}_{\beta} (\overline{\mathbf{w}})\right)
	\left(\mathcal{L}_{\beta} (\mathbf{w}_{k}) -\mathcal{L}_{\beta} ({\mathbf{w}_{k+1}})\right) \notag \\
 & \geq \varphi^{\prime}\left(\mathcal{L}_{\beta}(\mathbf{w}_{k}) - \mathcal{L}_{\beta} (\overline{\mathbf{w}})\right) \left(\frac{c_1}{2} (\| x_{k+1} - x_{k} \| + \| z_{k+1} \|)^2- \widehat{\delta}_k \right), \label{eq:thm_global_case2_2}
 \end{align}
where the second inequality follows from \eqref{eq:thm_global_convergence_eq1-1}. Combining \eqref{eq:thm_global_case2_1} and \eqref{eq:thm_global_case2_2} 
yields 
\begin{equation} 
\triangle_{k,k+1} \geq \frac{\frac{c_1}{2} \left( (\| x_{k+1} - x_{k} \| + \| z_{k+1} \| )^2 - \frac{2 \widehat{\delta}_k}{c_1}\right)} 
{c_2 \left( \| x_{k} - x_{k-1} \| + \|z_{k} \| + \frac{\sigma_{\max} {\delta}_{k-1}}{c_2} \right)}, \notag
\end{equation}
and hence 
\begin{align}
\| x_{k+1} - x_k \| + \|z_{k+1}\| 
& \leq  \sqrt{C \triangle_{k,k+1} \left( \| x_k - x_{k-1} \| + \|z_{k}\| +  {\xi}_{k-1} \right) + \widehat{\xi}_k} \notag \\
& \leq \sqrt{C \triangle_{k,k+1} \left( \| x_k - x_{k-1} \| + \|z_{k}\| +  {\xi}_{k-1} \right)} + \sqrt{\widehat{\xi}_k}, \notag 
\end{align}
where we denote $C := {2c_2}/{c_1}$, $\widehat{\xi}_k:= {2\widehat{\delta}_k}/{c_1}$, and $\xi_k:={\sigma_{\max}\delta_k}/{c_2}$ for notational simplicity, and in the second inequality we used the fact that $\sqrt{a+b} \leq \sqrt{a} + \sqrt{b}$ for any $a,b \geq 0$.
Furthermore, using the fact $2 \sqrt{a b} \leq (a + b)$ for any $a, b \geq 0$ with  $a = \| x_k - x_{k-1} \| +  \|z_{k}\| + \xi_{k-1}$ and $b = C \triangle_{k,k+1}$, we get
\begin{align}
2 \left(\| x_{k+1} - x_k \| + \|z_{k+1}\| \right)
\leq  \left(\| x_{k} - x_{k-1} \| + \|z_{k}\| + \xi_{k-1} + C \triangle_{k,k+1} \right)+  2 \sqrt{\widehat{\xi}_k}, \label{eq:thm_global_case2_3}
\end{align}
Summing \eqref{eq:thm_global_case2_3} over $t=k_2+1,\ldots, k$, we obtain
\begin{equation}\label{eq:thm_global_case2_4} 
\begin{aligned}
2 \sum_{t = k_2 + 1}^k \| {x}_{t+1} -{x}_{t} \| 
& \leq 
\sum_{t = k_2 +1 }^k \left( \| {x}_t -{x}_{t-1} \| + \| z_{t} \| \right) + C \sum_{t = k_2 +1 }^k \triangle_{t,t+1}  + \sum_{t = k_2 +1 }^k \left(    \xi_{t-1} + 2\sqrt{\widehat{\xi}_t}\right)  \\
& \leq
\sum_{t=k_2 +1 }^k  ( \| x_{t+1} - x_{t} \| + \| z_{k+1} \|) + \| x_{k_2 + 1} - x_{k_2} \| + \| z_{k_2+1} \|  \\
& \quad 
+ C \sum_{t=k_2 +1 }^k \triangle_{t,t+1} + \sum_{t = k_2 +1 }^k \left( \xi_{t-1} + 2\sqrt{\widehat{\xi}_t}\right)  \\
& =
\sum_{t=k_2 +1 }^k  ( \| x_{t+1} - x_{t} \| + \| z_{k+1} \|) + \| x_{k_2 + 1} - x_{k_2} \| + \| z_{k_2+1} \|  \\
& \quad 
+ C \triangle_{k_2+1,k+1} + \sum_{t = k_2 +1 }^k \left( \xi_{t-1} + 2\sqrt{\widehat{\xi}_t}\right), 
\end{aligned}
\end{equation} 
where the last equality is from the fact $\triangle_{p,q}+\triangle_{q,r}=\triangle_{p,r}$ for all $p,q,r \in \mathbb{N}$. Since $\varphi \geq 0$, we have
\begin{align}
\triangle_{k_2+1,k+1}  
 = \varphi\left(\mathcal{L}_{\beta} (\mathbf{w}_{k_2+1}) -\mathcal{L}_{\beta} (\overline{\mathbf{w}})\right)
-\varphi\left(\mathcal{L}_{\beta} (\mathbf{w}_{k_2+2}) -\mathcal{L}_{\beta} (\overline{\mathbf{w}})\right)  \leq \varphi\left(\mathcal{L}_{\beta}(\mathbf{w}_{k_2+1}) -\mathcal{L}_{\beta} (\overline{\mathbf{w}})\right). \label{eq:thm_global_case2_5}
\end{align}
Substitute \eqref{eq:thm_global_case2_5} into \eqref{eq:thm_global_case2_4} yields  
\[
\begin{aligned}
& \sum_{t=k_2 + 1}^k \left(\|x_{t+1} -x_{t} \| + \|z_{k+1} \| \right) \\ & \leq \| x_{k_2+1} -{x}_{k_2} \|  + \| z_{k_2+1} \|
+ C \varphi\left(\mathcal{L}_{\beta} (\mathbf{w}_{k_2+1}) -\mathcal{L}_{\beta} (\overline{\mathbf{w}})\right)  + \sum_{t = k_2 +1 }^k \left( \xi_{t-1} + 2\sqrt{\widehat{\xi}_t}\right).
\end{aligned}
\]
Notice that the first three terms on the RHS of the above inequality is independent of  $k$ and $\sum_{t = k_2 +1 }^k \left( \xi_{t-1} + 2\sqrt{\widehat{\xi}_t}\right) < + \infty$. Hence, 
\begin{equation}
\sum_{k=1}^{+ \infty} \|  x_{k+1} - x_{k} \| < + \infty,  \quad \sum_{k=1}^{+ \infty} \|z_{k+1} \| < + \infty, \notag
\end{equation}
which, along with the update steps  \eqref{eq:mu_update}, \eqref{eq:lambda_update}, and \eqref{eq:z_update}, gives
\begin{align}
\sum_{k=1}^{+ \infty} \|  z_{k+1} - z_{k} \| & \leq \frac{\rho\sigma_{\max}}{\alpha}\sum_{k=1}^{+ \infty} \|  x_{k+1} - x_{k} \| < + \infty, \qquad
\sum_{k=1}^{+ \infty} \|  \mu_{k+1} - \mu_{k} \| \leq \sum_{k=1}^{+ \infty} \delta_k < + \infty, \notag \\
\sum_{k=1}^{+ \infty} \| \lambda_{k+1} - \lambda_{k} \| &  \leq  \rho \sigma_{\max} \sum_{k=1}^{+ \infty} \| x_{k+1} - x_{k}\| + \sum_{k=1}^{+ \infty} \| \mu_{k+1} - \mu_{k}\| < + \infty. \notag 
\end{align}
Therefore, the sequence $\left\lbrace \ww_k \right\rbrace$ is a Cauchy sequence and the whole sequence $\left\lbrace \ww_k \right\rbrace$ converges to an $\overline{\ww}$, which, by Theorem \ref{sub_converge}, is a KKT solution of problem \eqref{eq:op}.
\end{proof}

\section{Numerical Experiments} \label{experiment}

we conduct preliminary numerical experiments to validate the effectiveness of our proposed algorithm. We compare the performance of our algorithm with a state-of-the-art algorithm, Smoothed Proximal ALM (SProx-ALM) (\citet{zhang2020proximal,zhang2022global}). The SProx-ALM is a single-loop algorithm with a complexity of $\mathcal{O}(1/\epsilon^{2})$. All experiments were conducted using MATLAB 2021b on a laptop with a 2.6 GHz Intel Core i7 processor and 16GB of memory.

We consider the following nonconvex linearly constrained quadratic program (LCQP):
\begin{equation} \label{eq:lcqp}
	\underset{x \in \mathbb{R}^n}{\mathrm{min}} \ f(x):= \frac{1}{2}x^{\top}Qx + r^{\top}x \ \ \ \mathrm{s.t.} \ \ \   Ax = b, \ \ \ x \in X , 
\end{equation}
where $Q \in \mathbb{R}^{n \times n}$ is symmetric but not positive semidefinite matrix, $r \in \mathbb{R}^n$, $A \in \mathbb{R}^{m \times n}$, $b \in \mathbb{R}^m$, and  $X = \{x \in \mathbb{R}^n: l_i \leq x_i \leq u_i, \ i=1,\ldots,n  \}$. Here, $h(x)=\mathcal{I}_X(x)$ denotes the indicator function for $X$.
We evaluate our method on four different problem sizes, denoted by $(n \times m)$: $(50\times 10), (100 \times 10), (500\times 50)$, and $(1000 \times 100)$.  For every instance, we set $l_i=0$ and $u_i=5$ for all $i=1,\ldots,n$. 

We generate data as follows: The matrix $Q$ is generated as $Q=(\Tilde{Q} +\Tilde{Q}^\top)/2$, where the entries of $\Tilde{Q}$ are randomly generated from the standard Gaussian distribution $\mathcal{N}(0,1)$. The entries of  $q$ and $A$ are also generated from the standard Gaussian. Moreover, we set $b=Ax$, where $x$ is randomly drawn from the standard Gaussian. The following MATLAB code is used to generate the data:
\vspace{0.05in}

\begingroup
\renewcommand{\baselinestretch}{1.0}
\begin{adjustwidth}{0.15in}{}
\begin{verbatim}
QP.Q1 = randn(n);
QP.Q = (QP.Q1+QP.Q1')/2;  % matrix Q
QP.r = randn(n,1);        % vector r 
QP.A = randn(m,n);        % linear operator A
xx = randn(n,1);          % a random x
QP.b = QP.A*xx;           % vector b
\end{verbatim}
\end{adjustwidth}
\endgroup
\vspace{0.05in}

\begin{figure}[t!]
\centering
\begin{subfigure}{1\textwidth}
\centering
\includegraphics[scale=0.583]{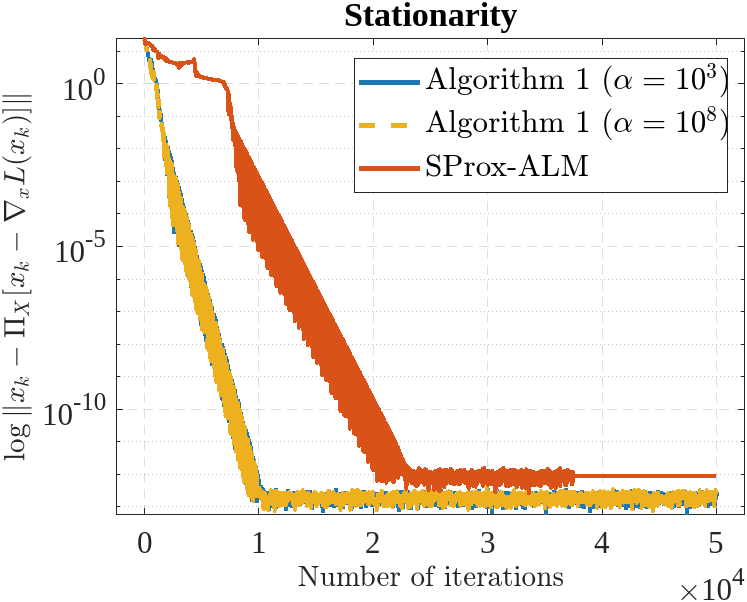} \hspace{0.15in}
\includegraphics[scale=0.583]{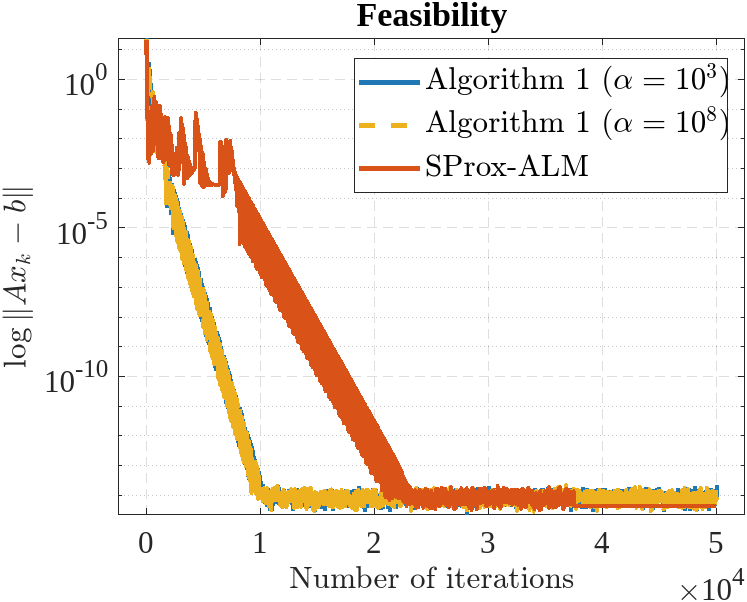}
\caption{{\small An instance with $n=50$ and $m=10$.}}
\end{subfigure}
\vspace{0.01in}

\begin{subfigure}{1\textwidth}
\centering
  \includegraphics[scale=0.583]{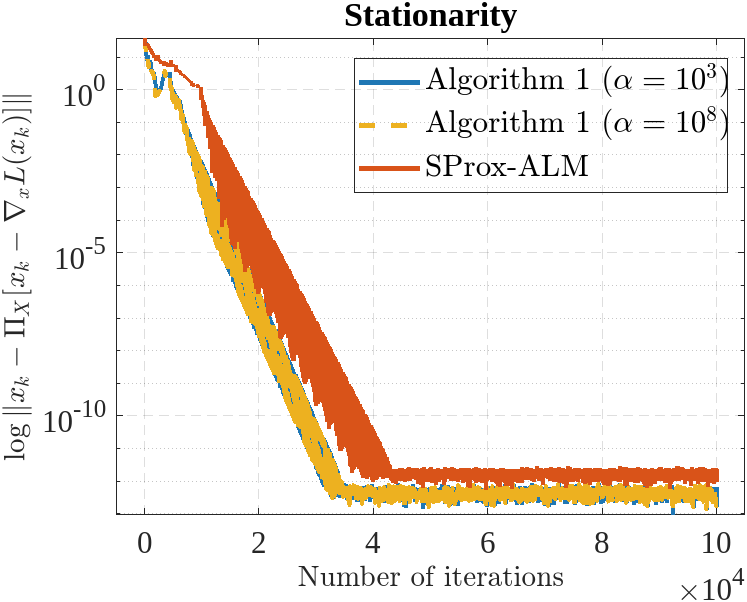}\hspace{0.15in}
  \includegraphics[scale=0.583]{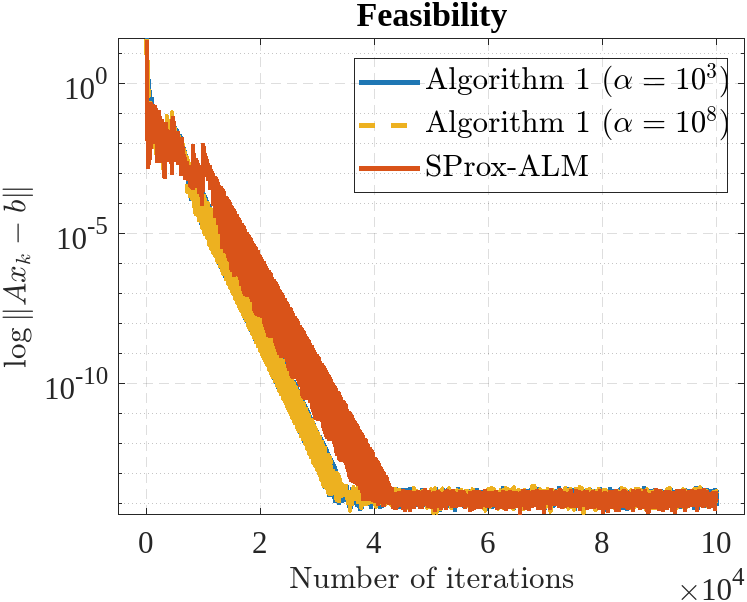}
  \caption{{\small An instance with $n=100$ and $m=10$.}}
\end{subfigure}
\caption{Performance comparison of \textsf{Algorithm 1} with different choices of $\alpha>0$ and \textsf{SProx-ALM} on LCQP \eqref{eq:lcqp}. Here, $\nabla_x L(x_k)$ represents both $\nabla_x \mathcal{L}_{\beta}(x_k,z_k,\lambda_k,\mu_k)$ for \textsf{Algorithm 1} and $\nabla_x L(x_k,\lambda_k)$ for \textsf{SProx-ALM}. Interestingly, the convergence behavior of \textsf{Algorithm 1} remains consistent, regardless of how large the (false) penalty parameter $\alpha$ is, even for a significantly large value $\alpha=10^8$. } \label{fig1}
\end{figure}
\noindent For all experiments, the parameters for \textsf{Algorithm \ref{algorithm1}} are set simply as follows:
$$\beta=0.5, \ \ \alpha = 10^3, \ \ \delta_0=0.5, \ \  r = 1 - 10^{-7}, \ \ \eta = \frac{1}{L_Q +  \left(2 + \frac{1}{1+ \alpha \beta}\right) \rho \sigma_{\max}^2},$$
where $L_Q$ is the eigenvalue of $Q$ with the largest absolute value. The initial point $x_0$ is generated randomly and $(z_0, \lambda_0, \mu_0)=({0,0,0})$ for all test instances.

The \textsf{SProx-ALM} is given by 
\begin{equation}
    \begin{cases}
    \lambda_{k+1} = \lambda_k + \Tilde{\alpha} (A x_k-b); \\
    x_{k+1} = \Pi_X [x_k - c \nabla_x K (x_k,z_k,\lambda_{k+1})]; \\
    z_{k+1} = z_k + \Tilde{\beta} (x_{k+1} - z_k),
    \end{cases} \notag
\end{equation}
where 
$K(x, z,\lambda):= {L}(x,\lambda) + \frac{p}{2}\|x - z \|^2 \  \text{with}  \ {L}(x,\lambda) := f(x) + \left\langle \lambda, Ax - b \right\rangle + \frac{\gamma}{2} \| Ax - b \|^2$, 
and $\Pi_X[\cdot]$ is the projection operator onto $X$. The parameters for SProx-ALM are set as in \cite[Section 6.2]{zhang2020proximal}:
$$\Tilde{\alpha} = \frac{\gamma}{4}, \ \ p = 2L_Q, \ \ \Tilde{\beta} = 0.5, \ \  
c = \frac{1}{2(L_Q + p + \gamma \sigma_{\max}^2 )}.$$  
The initial point $x_0=z_0$ is randomly generated with $\lambda_0 = 0$.

\begin{figure}[t!]
\centering
\begin{subfigure}{1\textwidth}
\centering
\includegraphics[scale=0.583]{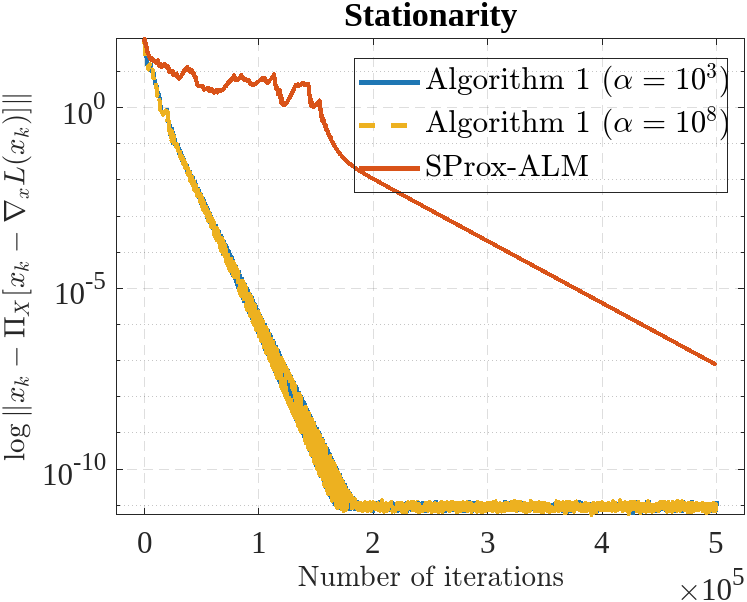} \hspace{0.15in}
\includegraphics[scale=0.583]{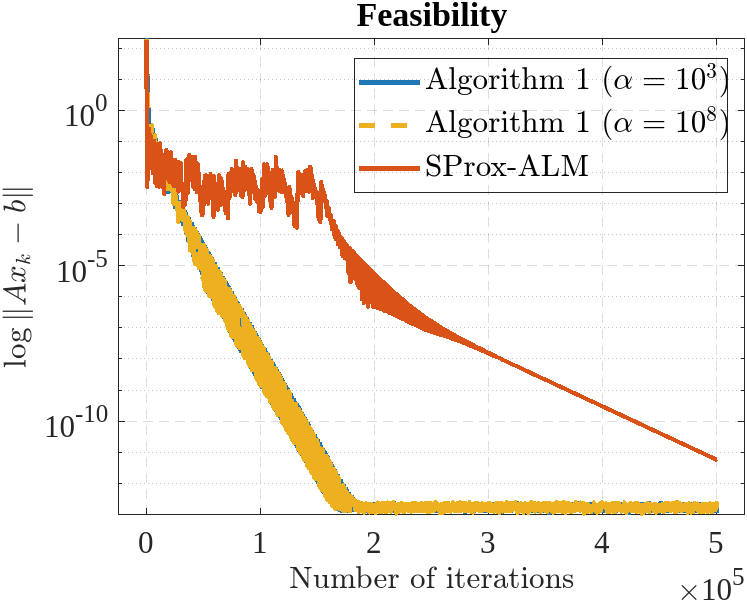}
\caption{{\small An instance with $n=500$ and $m=50$.}}
\end{subfigure}
\vspace{0.01in}

\begin{subfigure}{1\textwidth}
\centering
  \includegraphics[scale=0.583]{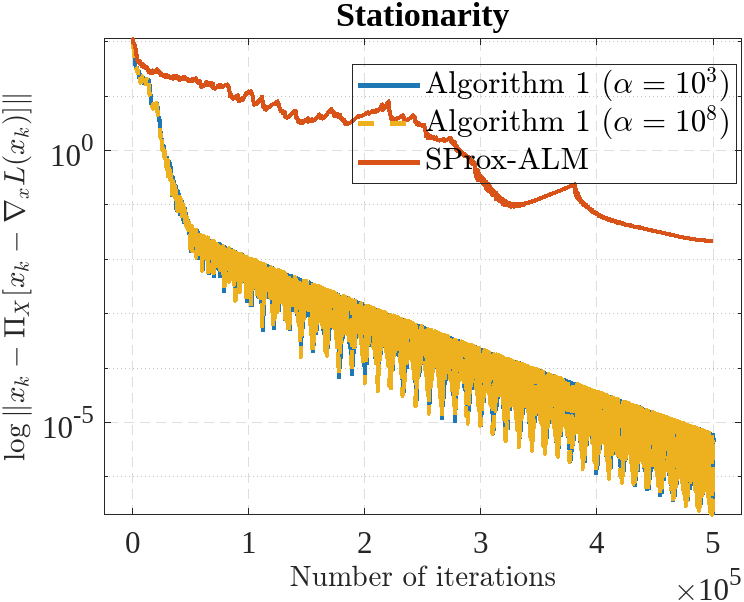}\hspace{0.15in}
  \includegraphics[scale=0.583]{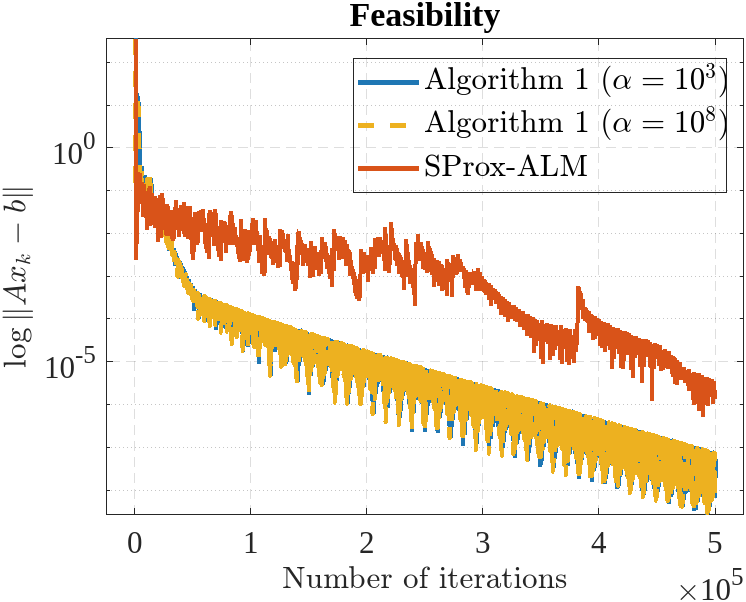}
  \caption{{\small An instance with $n=1000$ and $m=100$.}}
\end{subfigure}
\caption{Performance comparison of \textsf{Algorithm 1} with different choices of $\alpha>0$ and \textsf{SProx-ALM} on larger LCQP \eqref{eq:lcqp} instances.  Regardless of how large value of  $\alpha$, \textsf{Algorithm 1} exhibits consistent behavior in larger settings.} \label{fig2}
\end{figure}

To evaluate the convergence behaviors of 
 {Algorithm \ref{algorithm1}} and {SProx-ALM}, we use the quantities to measure the stationarity (first-order optimality):
\begin{align}
  \left\|x_k - \Pi_X [x_k - \nabla_x \mathcal{L}_{\beta}(x_k,z_k,\lambda_k,\mu_k)] \right\|  \quad \text{and} \quad
 \left\|x_k - \Pi_X [x_k - \nabla_x L(x_k,\lambda_k)] \right\|  \notag
\end{align}
for Algorithm \ref{algorithm1} and SProx ALM, respectively. For the feasibility measure, the quantity $\|Ax_k -b\|$ is used for both algorithms.  

The numerical results clearly demonstrate that Algorithm \ref{algorithm1} effectively and efficiently solves the LCQP instances. Figures \ref{fig1} and \ref{fig2} illustrate the performance of Algorithm \ref{algorithm1} for all four instances, showing its faster convergence compared to SProx-ALM. In particular, Figure \ref{fig2} indicates that Algorithm \ref{algorithm1} significantly outperforms SProx-ALM when applied to larger problems. Furthermore, Figure \ref{fig2} also highlights a practical strength of Algorithm \ref{algorithm1};  it provides a consistent reduction of both the stationarity and feasibility gaps, which aligns with our theoretical findings. We make a few remarks on our numerical experiments:
\vspace{0.05in}

\begin{enumerate}
\item Setting parameters for Algorithm \ref{algorithm1} is straightforward, involving considerations of $L_f$ and $\sigma_{\max}$, along with a simple setting of $\beta = 0.5$. In addition, it is noteworthy that the performance of Algorithm \ref{algorithm1} is not sensitive to the value of (false) penalty parameter $\alpha$. Even with a significantly large $\alpha=10^8$, Algorithm \ref{algorithm1} consistently performs well. This robustness stems from the fact that $\alpha$ primarily influences the $z$-update \eqref{eq:z_update} through exact minimization. On the other hand, we observed that the performance of SProx-ALM is highly sensitive to the choices of parameters such as $\gamma$, $\tilde{\alpha}$, and $\tilde{\beta}$, particularly when dealing with larger instances. 

\item For large problems, it is crucial to choose the reduction ratio $r$ closer to 1, as these problems require more iterations to achieve the desired levels of stationarity and feasibility (Figure \ref{fig2}). A smaller $r$ can cause $\mu_k$ to converges a certain point before $\lambda_k$ satisfies the KKT conditions (Remark \ref{remark_step_mu}). Therefore, for large-scale problems, choosing $r$ closer to 1 is crucial to ensure convergence to an $\epsilon$-KKT solution in our proposed algorithm in practice. 
\end{enumerate}

\section{Conclusions}
This paper presented a novel primal-dual framework that incorporates false penalization and dual smoothing to solve linearly constrained nonconvex optimization problems. Our method achieves the best-known complexity bound of $\mathcal{O}(1/\epsilon^2)$ with theoretical guarantees. The proposed method has distinct advantages in that it does not rely on some restrictive assumptions often imposed by other algorithms and it ensures a consistent reduction in both first-order optimality and feasibility gaps. Experimental results validate that our algorithm performs better than the existing single-loop algorithm. Future research could consider extending this framework to tackle nonlinear and/or stochastic constrained nonconvex optimization problems, which could potentially broaden its applicability across a wide range of domains.

\bibliographystyle{informs2014}		
\bibliography{references}

\end{document}